\newcommand{\C}{{\mathbb C}}
\newcommand{\F}{{\mathbb F}}
\newcommand{\Q}{{\mathbb Q}}
\newcommand{\Z}{{\mathbb Z}}
\newcommand{\Gam}{{\Gamma}}
\newcommand{\vep}{{\varepsilon}}
\newcommand{\sig}{{\sigma}}
\newcommand{\om}{{\omega}}
\newcommand{\sL}{\mathscr{L}}
\newcommand{\sO}{\mathscr{O}}
\DeclareMathOperator{\Coker}{Coker}
\DeclareMathOperator{\Hom}{Hom}
\DeclareMathOperator{\Ker}{Ker}
\DeclareMathOperator{\Aut}{Aut}
\DeclareMathOperator{\Spec}{Spec}
\DeclareMathOperator{\Pic}{Pic}
\DeclareMathOperator{\Gr}{Gr}
\DeclareMathOperator{\tor}{tor}
\DeclareMathOperator{\Kr}{Kr}
\DeclareMathOperator{\an}{an}
\newcommand{\lan}{\langle}
\newcommand{\ran}{\rangle}
\newcommand{\alg}{\textit{alg}}
\renewcommand{\tt}{{T}^{-1}}
\renewcommand{\div}{\operatorname{div}}
\theoremstyle{plain}
\newtheorem{thm}{Theorem}[section]
\newtheorem{lem}[thm]{Lemma}
\newtheorem{prop}[thm]{Proposition}
\theoremstyle{definition}
\newtheorem{remk}[thm]{Remark}
\begin{document}

%%%%%%%%%%%%%%%%%%%%%%%%%%%%%%%
%%    Article Information     %%
%%%%%%%%%%%%%%%%%%%%%%%%%%%%%%%
\title[]{Torsion points on hyperelliptic Jacobians
\\ 
via Anderson's $p$-adic soliton theory}
\author{Yuken~Miyasaka \and Takao~Yamazaki}
\address{Mathematical Institute, Tohoku University,
         Sendai 980-8578, Japan}
\curraddr{}
\email[Takao Yamazaki]{ytakao@math.tohoku.ac.jp}
\email[Yuken Miyasaka]{sa7m27@math.tohoku.ac.jp}
\urladdr{}
\dedicatory{}
\date{\today}
\translator{}
\keywords{torsion in Jacobian, theta divisor,
Sato Grassmannian, $p$-adic tau function}
\subjclass[2010]{}
\thanks{The first author is supported by JSPS Research Fellowship for Young Scientists.
The second author is supported 
by Grant-in-Aid for Challenging Exploratory Research (22654001), Grant-in-Aid for Young Scientists (A) (22684001), and Inamori Foundation.}

%
%%%%%%%%%%%%%%%%%%%%%%%%%%%%%%%
%%%   ABST  %%
%%%%%%%%%%%%%%%%%%%%%%%%%%%%%%%
\begin{abstract}
We  show that
torsion points of certain orders are not on a theta divisor
in the Jacobian variety of a hyperelliptic curve
given by the equation $y^2=x^{2g+1}+x$ with $g \geq 2$.
The proof employs a method of Anderson 
who proved an analogous result for a cyclic quotient
of a Fermat curve of prime degree.
\end{abstract}
\maketitle

%\tableofcontents

%%%%%%%%%%%%%%%%%%%%%%%%%%%%%%%%
%%%%%%%%%%%%%%%%%%%%%%%%%%%%%%%%%
\section{Introduction}
Let $K$ be a field of characteristic zero.
Let $A$ be an abelian variety over $K$
and $Z$ $(\neq A)$ a closed subvariety of $A$.
A celebrated result of Raynaud \cite{Raynaud} 
implies that the intersection of $Z$ with
torsion points $A_{\tor}$ on $A$ is finite,
if $Z$ is a curve of genus at least two,
or if $A$ is absolutely simple.
However, it is usually not easy to determine this finite set
$Z \cap A_{\tor}$ explicitly for given $A$ and $Z$.

Now let us assume $A=J$ is the Jacobian variety of 
a smooth projective  geometrically 
connected curve $X$ of genus $g \geq 2$.
Of particular interest is 
the case where $Z=X$ 
is the Abel-Jacobi embedded image of $X$
with respect to some base point.
Since Coleman \cite{Coleman1}
started to study this problem,
many works have been done in this direction.
See \cite{Tzermias} for a lucid survey on this subject.
Anderson \cite{Anderson} considered
the case where $Z=\Theta$ is the theta divisor of $J$.
He proved that torsion points of 
certain prime orders are not on $\Theta$
when $X$ is a cyclic quotient of a Fermat curve of prime degree.
For details of this result and 
its generalization by Grant \cite{Grant},
see Remark \ref{rem:anderson} (2) below.
In order to prove his result,
Anderson developed a $p$-adic analogue of the theory of
{\it tau function},
which was originally introduced by Sato 
\cites{Sato, Sato-Sato} (see also \cite{Segal-Wilson})
in his study of soliton equations
(in the complex analytic setting).
In this paper, we apply Anderson's theory to other curves
and prove analogous results.

\subsection{Setting}\label{sect:setting}
To state our main result, we introduce notations.
Fix an integer $g \geq 2$.
Let $K$ be a field of characteristic zero 
that contains a primitive $4g$-th root $\zeta$ of unity.
We consider a hyperelliptic curve $X$ of genus $g$ over $K$
defined by the equation
\begin{equation}\label{eqn:hyperelliptic}
y^2=x^{2g+1}+x.
\end{equation}  
Let $\infty$ be the $K$-rational point 
at which the functions $x$ and $y$ have poles.
There is an automorphism $r$ of $X$ of order $4g$
defined by $r(x,y)=(\zeta^2x,-\zeta y)$.
Let $G := \lan r \ran$ be the subgroup of $\Aut(X)$ 
generated by $r$.
The Jacobian variety $J$ of $X$
will be considered as a $\Z[G]$-module
by the induced action of $G$.
(We will see in \S \ref{sect:otsubo}
that $J$ is absolutely simple when $g>45$.)
We define the theta divisor
$\Theta$ to be the set of $\sL\in J$ 
such that $H^0(X, \sL((g-1)\infty)) \not= \{ 0 \}$.
Note that $r(\infty)=\infty$ so that $\Theta$ is stable under the action of $r^*$.
For any $n \in \Z_{>0}$, we write
$J[n]$ for $n$-torsion subgroup of $J$.

\subsection{Main results}
Let $p$ be a prime number such that  
$p \equiv 1\mod 4g,$
and choose a prime ideal $\wp \subset \Z[\zeta]$ lying above $p$.
We write $\chi$ for the composition of 
\[ G \to \Z[\zeta]^* 
  \twoheadrightarrow (\Z[\zeta]/\wp)^* = \F_p^*
\]
where the first map is defined by $r \mapsto \zeta$.
We will show in Lemma \ref{lem:decomp} below
that we have
\[  \dim_{\F_p} J[p]^{\chi} = 1
\]
where $J[p]^{\chi}=\{ \sL \in J[p] ~|~ r^*\sL = \chi(r) \sL\}.$
Our main results are the following:

\begin{thm}\label{thm:maimtheorem}
We have 
\begin{equation*}
(J[p]^{\chi} + J[2] ) \cap \Theta \subseteq J[2].
\end{equation*}
\end{thm}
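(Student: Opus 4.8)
The plan is to follow Anderson's $p$-adic soliton method \cite{Anderson}, adapted to the curve \eqref{eqn:hyperelliptic}.

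\emph{Reformulating $\Theta$ via a tau-function.} Fix a local parameter $t$ at $\infty$, for instance with $x=t^{-2}$ and $y=t^{-(2g+1)}(1+t^{4g})^{1/2}$, and embed $K(X)\hookrightarrow K((t))$ by expansion at $\infty$. The coordinate ring $A=K[x,y]$ maps onto the space of functions regular away from $\infty$ and realizes exactly the set of pole orders $2\Z_{\ge0}\cup(2g+1+2\Z_{\ge0})$, so the Weierstrass gaps at $\infty$ are $1,3,\dots,2g-1$. For a line bundle $\sM$ of degree $g-1$, let $W(\sM)\subset K((t))$ be the image of $H^{0}(X\smallsetminus\infty,\sM)$; this is a point of the Sato Grassmannian, and (for a suitable trivialization of $\sM$ near $\infty$) $h^{0}(X,\sM)=\dim_{K}\!\big(W(\sM)\cap K[[t]]\big)$. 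Taking $\sM=\sL((g-1)\infty)$, we see $\sL\in\Theta$ iff the canonical map $W(\sM)\to K((t))/K[[t]]$ fails to be an isomorphism, i.e. iff $\tau(\sM)=0$, where $\tau$ is the determinant of that map in suitable bases. So Theorem~\ref{thm:maimtheorem} reduces to showing $\tau(\sL((g-1)\infty))\ne0$ for every $\sL\in(J[p]^{\chi}+J[2])\smallsetminus J[2]$.

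\emph{Producing soliton data and reducing mod $\wp$.} Each such $\sL$ is torsion of order dividing $2p$; writing $\sL=[D]$, some $N\mid2p$ makes $ND$ principal, and we fix $f\in\overline K(X)^{\times}$ with $\div f=ND$. Because $\sL$ lies in the $\chi$-eigenspace of Lemma~\ref{lem:decomp}, $r^{*}\sL=m\sL$ with $m\equiv\zeta\bmod\wp$, forcing a functional equation $r^{*}f=c\,f^{m}h^{N}$ for a constant $c$ and some $h\in\overline K(X)^{\times}$. Anderson's construction then exhibits $W(\sM)$ as an explicit transform of the subspace $A$ by the Laurent expansion of $f$ at $\infty$, and rewrites $\tau(\sM)$ as a finite determinant of generalized-Schur type in the coefficients of $f$, whose shape is dictated by the gap sequence $1,3,\dots,2g-1$. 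Since the discriminant of $x^{2g+1}+x$ is $\pm(2g)^{2g}$, which is prime to $p$ as $p\equiv1\bmod4g$, the curve has good reduction at $\wp$; after clearing $\wp$-denominators, the determinant acquires a well-defined reduction modulo $\wp$.

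\emph{The crux: the reduced determinant is a unit.} The main obstacle is to prove this mod-$\wp$ determinant is nonzero in $\F_{p}$. The congruence $p\equiv1\bmod4g$ is used here in an essential way: it places all $4g$-th roots of unity in $\F_{p}$ and splits the cyclotomic data, so that the reduced matrix has entries built from factors $1-\zeta^{a}$ and binomial-type coefficients whose arguments are bounded solely in terms of $g$; as $p$ is far larger than $4g$, each such factor is a $\wp$-adic unit. The real content is then to rule out cancellation --- i.e. to show the matrix is nonsingular mod $\wp$, e.g. by displaying it as block-triangular with invertible diagonal blocks. This should come down to the fact that the eigenvalue in question is $\zeta^{1}$ --- the unique odd residue $\equiv1\bmod4g$ in $\{1,\dots,4g-1\}$, which is exactly why $\dim_{\F_{p}}J[p]^{\chi}=1$ --- together with how the progression $1,3,\dots,2g-1$ of gaps interlaces the pole orders of $A$.

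\emph{Absorbing $J[2]$ and the scalar factors.} A $2$-torsion class is a sum of differences of Weierstrass points, namely $[(0,0)-\infty]$ and the $[(\beta,0)-\infty]$ with $\beta^{2g}=-1$, all of which $r$ either fixes or cyclically permutes; translating $\sM$ by such a class reindexes and rescales the spanning sections of $W(\sM)$, so it multiplies the determinant by a monomial in roots of unity and leaves the argument of the previous step intact. Similarly, replacing the generator $P$ of $J[p]^{\chi}$ by a nonzero multiple $cP$ only changes $f$ by an overall power, which does not affect the vanishing of the reduced determinant. Therefore $\tau(\sL((g-1)\infty))$ is a $\wp$-adic unit, hence nonzero, for every $\sL\in(J[p]^{\chi}+J[2])\smallsetminus J[2]$; since any other element of $J[p]^{\chi}+J[2]$ already lies in $J[2]$, this is precisely Theorem~\ref{thm:maimtheorem}.
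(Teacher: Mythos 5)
Your general frame (Sato Grassmannian, tau function, reduction mod $\wp$) is the right one, but the proposal leaves the two steps that constitute the actual content of the proof unproven. First, and most importantly, you never establish a usable link between the abstract torsion class $\sL'\in J[p]^{\chi}\smallsetminus\{0\}$ and any computable soliton data. The paper's mechanism is entirely different from your ``take $f$ with $\div f=ND$'' route: it proves the identity $T^{p}-e_{0}T=a(T)+g(T)$ with $a(T)\in A\cap\Z[[T^{-1}]][T]$, $g(T)\in T^{-1}\Z[[T^{-1}]]$ and $e_{0}=\binom{2gp'}{p'}$ a $p$-adic \emph{unit} (Lemma \ref{lem:aux}, using $(-y/x^{g})^{p-1}=(x+x^{1-2g})^{2gp'}$, which is where the special equation $y^{2}=x^{2g+1}+x$ enters), and uses it to factor the $p$-th power of the Dwork loop $h_{D}(T)=\exp(\pi((\vep_0T)-(\vep_0T)^{p}))$ as $h_{A}h_{-}$ with $h_{A}\in\bar A\cap\Gam(K)$ and $h_{-}\in\Gam_{-}(K)$. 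This is what shows $[h_{D}(\sO_X,N)]$ is a nonzero element of $J[p]^{\chi}$ and, together with the twists $h_{D}(\omega(i)T)$, that \emph{every} nonzero element of $J[p]^{\chi}$ arises this way. Your functional equation $r^{*}f=c\,f^{m}h^{N}$ is never exploited and does not by itself produce a determinant whose entries you can control $\wp$-adically.

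Second, the step you label ``the crux'' is asserted, not proved: ``the real content is then to rule out cancellation \dots this should come down to \dots'' is exactly the part that requires Anderson's delicate estimate of the $p$-adic tau function (Theorem \ref{thm:Dwork_loop}, i.e.\ \cite[Lemma 3.5.1]{Anderson}), and that theorem only applies after one verifies hypotheses (A1) and (A2) --- namely that the Krichever pair being moved by the Dwork loop admits an admissible basis with coefficients in $\Z[\zeta][[T^{-1}]][T]$ of norm one and has partition bounded by $p/4$. This verification is the content of \S\ref{remk:A-basis} and \S\ref{sect:twotorsion}. Relatedly, your claim that translating by a $2$-torsion class merely ``reindexes and rescales the spanning sections'' and multiplies the determinant by a monomial in roots of unity is false: passing from $\sO_X$ to $\sL_I=\sO_X(D_I)$ changes the subspace $W$ and its partition (from $(g,g-1,\dots,1,0,\dots)$ to $(g-s,\dots,1,0,\dots)$ with $s=|I|$), which is why the paper constructs integral admissible bases for each $(\sL_I,\sig_I)$ separately and applies Anderson's theorem to the pair $h(\sL_I,\sig_I)$ directly, rather than reducing to the trivial bundle. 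As it stands the proposal is a plausible outline with the decisive arguments missing.
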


\begin{thm}\label{thm:maimtheorem2}
Assume that $K$ is a finite extension of $\Q_p$. 
Let $Q\in X(K)$ and put $\sL_Q := \sO_X(Q-\infty)$.
Assume that the coordinates 
$x(Q)$ and  $y(Q)$ of $Q$ belong to the integer ring of $K$.  
Then we have 
\begin{equation*}
(J[p]^{\chi} + \sL_Q ) \cap \Theta = \{\sL_Q\}.
\end{equation*}
\end{thm}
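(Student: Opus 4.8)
The plan is to run Anderson's $p$-adic soliton machinery, following the same pattern as the proof of Theorem~\ref{thm:maimtheorem}, the extra input being the $\wp$-adic integrality furnished by the hypothesis on $Q$. First dispose of the trivial inclusion: $\sL_Q((g-1)\infty)=\sO_X(Q+(g-2)\infty)$ is an effective divisor of degree $g-1$, so it has a nonzero global section and $\sL_Q\in\Theta$. By Lemma~\ref{lem:decomp} the space $J[p]^{\chi}$ is one-dimensional over $\F_p$; fix a generator $\sM$, so every nonzero element of $J[p]^{\chi}$ equals $j\sM$ for a unique $j$ with $1\le j\le p-1$. The content of the theorem is then that $j\sM+\sL_Q\notin\Theta$ for all such $j$.

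The next step is to transport the problem to the Sato Grassmannian. Take $t:=x^{g}/y$ as a local parameter at $\infty$ (so $\ord_\infty t=1$), put $V=K((t))$ and $V_+=K[[t]]$, and let $W(\sL)\subset V$ be the point of the Grassmannian attached to $\sL\in J$ by the Krichever map and a rational trivialization at $\infty$, suitably normalized to incorporate the twist by $(g-1)\infty$ in the definition of $\Theta$. Then $\sL\in\Theta$ exactly when $W(\sL)$ is not transverse to $V_+$, i.e.\ $W(\sL)\cap V_+\ne 0$, which is the vanishing of Anderson's tau function of $W(\sL)$ at the corresponding group element. Now exploit that $\sM$ is $p$-torsion: there is $\varphi\in K(X)^{\times}$ with $\div\varphi=p\,D_{\sM}$ for an explicit divisor $D_{\sM}$ representing $\sM$ --- this is where the equation $y^{2}=x^{2g+1}+x$ enters concretely --- and with it the cyclic cover $w^{p}=\varphi$. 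Multiplication by a branch of $\varphi^{j/p}$ carries $W(\sL_Q)$ onto $W(\sL_Q+j\sM)$, so substituting into the tau function reduces the question ``$j\sM+\sL_Q\in\Theta$?'' to the non-vanishing of a single number $\tau_j\in K$: a value of Anderson's $p$-adic tau function attached to the cover $w^{p}=\varphi$ and to the class of $Q-\infty$.

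It remains to show $\tau_j\ne 0$, and here both hypotheses act exactly as in Anderson's argument. Since $p\equiv 1\bmod 4g$, the prime $p$ is unramified in $\Z[\zeta]$, is a prime of good reduction for $X$ (whose bad primes divide $2g$), and $\chi(r)\in\F_p^{\times}$ has exact order $4g$; since $x(Q)$ and $y(Q)$ lie in the integer ring of $K$, the cycle and divisor data defining $\tau_j$ are $\wp$-integral, so $\tau_j$ is a $\wp$-adic integer and its reduction $\bar\tau_j$ modulo $\wp$ is defined. One then evaluates $\bar\tau_j$ in closed form --- an explicit product, the hyperelliptic counterpart of the circular-unit computations in Anderson's Fermat setting, built from the data attached to the finite Weierstrass points $x^{2g}=-1$ --- and checks that $\bar\tau_j\ne 0$, using that $\chi(r)$ has order $4g$ and $1\le j\le p-1$. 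Then $\tau_j\ne 0$, hence $j\sM+\sL_Q\notin\Theta$, which with the first paragraph proves the theorem.

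I expect the main obstacle to be this final part. Fixing the correct soliton normalization $W(\sL_Q+j\sM)=\varphi^{j/p}\cdot W(\sL_Q)$ for the curve in hand, controlling the $\wp$-adic valuation of the resulting tau value so that integrality of $Q$ really yields a $\wp$-integral $\tau_j$, and then performing the reduction modulo $\wp$ together amount to transplanting Anderson's $p$-adic period computation to the family $y^{2}=x^{2g+1}+x$; this is where essentially all the work lies, and where the integrality of $Q$ and the congruence $p\equiv 1\bmod 4g$ are actually used.
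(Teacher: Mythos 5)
Your overall frame---Anderson's $p$-adic soliton machinery, the Krichever map, the observation that $\sL_Q\in\Theta$ gives the easy inclusion, and the expectation that the integrality of $x(Q),y(Q)$ and the congruence $p\equiv 1\bmod 4g$ are the operative hypotheses---matches the paper. But there is a genuine gap exactly where you locate ``essentially all the work'': the non-vanishing of the tau value is asserted as the outcome of an unexecuted computation, and the computation you propose is not the one that works. You posit a function $\varphi$ with $\div\varphi=p\,D_{\sM}$, a branch of $\varphi^{j/p}$ acting on the Grassmannian, and then a closed-form evaluation of $\bar\tau_j$ modulo $\wp$ as an explicit product built from the Weierstrass points. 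None of this is established: it is not clear that a branch of $\varphi^{j/p}$ at $\infty$ lies in the $p$-adic loop group $\Gam(K)$ (so that it acts on $\Kr_{\an}(X,N)$ at all), and no closed-form ``circular-unit'' formula for the tau value is available for this curve. Anderson's non-vanishing theorem (Theorem \ref{thm:Dwork_loop}, i.e.\ his Lemma 3.5.1) is proved by a dominant-term \emph{estimate} on the Pl\"ucker coordinates after applying a \emph{Dwork loop}, not by evaluating the tau function in closed form and reducing mod $\wp$; the latter is closer to the Coleman--Gauss-sum style of argument, which is not what is transplanted here.

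The paper's route avoids your $\varphi^{j/p}$ entirely. The $p$-torsion points are \emph{constructed}, not presented by divisors: Lemma \ref{lem:aux} provides the curve-specific identity $T^p-e_0T=a(T)+g(T)$ with $e_0\in\Z_{(p)}^*$, $a(T)\in A\cap\Z[[T^{-1}]][T]$ and $g(T)\in T^{-1}\Z[[T^{-1}]]$, and Propositions \ref{prop:comp_loop} and \ref{prop:p-tor_Gr} use it to show that the Dwork loops $h_D(\xi T)$, $\xi\in\mu_{p-1}$, applied to $(\sO_X,N)$ sweep out exactly $J[p]^{\chi}\setminus\{0\}$. Given that, the present theorem follows in one line from Theorem \ref{thm:Dwork_loop}: by \S\ref{sect:degreeone} the hypothesis that $x(Q),y(Q)$ are integral yields an admissible basis of $W(\sL_Q,\sig_Q)$ with coefficients in $\Z[x(Q),y(Q)][[T^{-1}]][T]$ and partition $(g-1,\dots,1,0,\dots)$, so conditions (A1) and (A2) hold (note $g-1<p/4$ since $p\ge 4g+1$), whence $[h(\sL_Q,\sig_Q)]=\sL_Q\otimes\sL'\notin\Theta$ for every nonzero $\sL'\in J[p]^{\chi}$. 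To repair your write-up you would need to (i) replace $\varphi^{j/p}$ by the Dwork-loop construction, which requires proving the analogue of Lemma \ref{lem:aux} --- in particular that $e_0$ is a $p$-adic unit, the one genuinely curve-specific fact --- and (ii) actually verify (A1) and (A2) for $W(\sL_Q,\sig_Q)$, which is where the integrality of $Q$ is used and which your proposal never carries out.
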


\begin{remk}\label{rem:anderson}
\begin{enumerate}
\item
The set $\Theta \cap J_{\tor}$ is explicitly determined 
when $g=2$  by Boxall-Grant \cite{Boxall-Grant}.
It consists of twenty-two points
(over an algebraically closed field).
\item
For the sake of comparison,
we recall Anderson's result \cite{Anderson}.
Fix an odd prime number $l$, integers $a \geq b>1$ such that $l+1=a+b$,
and a primitive $l$-th root $\zeta_l$ of unity. 
Let $X$ be the smooth projective curve defined by
$y^l=x^a(1-x)^{b}$,
and define $J$ and $\Theta$ similarly as above.
(By Koblitz-Rohrlich \cite{Kob-Roh},
$J$ is absolutely simple.)
There is an automorphism $\gamma$ of $X$
defined by $\gamma(x,y)=(x,\zeta_l y)$,
which induces a $\Z[\zeta_l]$-module structure on $J$
such that $\zeta_l$ acts by $\gamma^*$.
For an ideal $\frak{a}$ of $\Z[\zeta_l]$,
we write $J[\frak{a}]$ for the $\frak{a}$-torsion subgroup of $J$.
Let $p$ be a prime number such that $p\equiv1\mod l$
and take a prime ideal $\wp\subset\Z[\zeta_l]$ over $p$.
Anderson's result \cite[Theorem 1]{Anderson} is the following:
$$
(J[\wp] + J[(1-\zeta_l)])\cap\Theta \subseteq J[(1-\zeta_l)].
$$
Grant \cite{Grant} improved Anderson's result by showing
for all $n\ge1$
$$
(J[\wp^n] + J[(1-\zeta_l)])\cap\Theta \subseteq J[(1-\zeta_l)]
$$
under the assumption that $X$ is hyperelliptic 
(that happens iff $a \in \{(l+1)/2, l-1\}$).
\item
In our setting, $X, \infty, J$ and $\Theta$ are all defined over $\Q$,
and the choice of $\wp$ is arbitrary.
By taking different choices of $\wp$,
one can replace $J[p]^{\chi}$ 
by $J[p]^{\chi^i} := \{ \sL \in J[p] ~|~ r^* \sL = \chi(r)^i \sL \}$ 
for any $i \in (\Z/4g\Z)^*$
in Theorem \ref{thm:maimtheorem}.
(In our proof, though, the value of $s$ appearing
after \eqref{eq:lastentry} will be changed.
Note also that 
a similar statement does not hold for Theorem \ref{thm:maimtheorem2}
because $Q$ may not be defined over $\Q$.)
It is an open problem to extend this result
to $i$ which is not prime to $4g$.
Another open problem is to replace
$J[p]$ by $J[p^n]$ with $n>1$ in 
Theorems \ref{thm:maimtheorem}, \ref{thm:maimtheorem2}
(compare Grant's result recalled in (2) above).
\item
The crutial step in our proof 
where we need to assume $X$ to be a special curve
\eqref{eqn:hyperelliptic}
is in \S \ref{sect:auxlemma}.
It might be possible to apply our method to other curves.
See Remark \ref{rem:addedcomment} for more discussion
about the possibility and difficulity in it.
\end{enumerate}
\end{remk}

This paper is organized as follows.
In \S 2 
we recall some results from Anderson \cite{Anderson}.
In \S 3 we study geometry 
of the hyperelliptic curve \eqref{eqn:hyperelliptic}.
The proof of Theorems \ref{thm:maimtheorem} and \ref{thm:maimtheorem2}
is completed in \S 4.
The last section \S 5 is devoted to an illustration of 
Anderson's results recalled in \S2.

%%%%%%%%%%%%%%%%%%%%%%%%%%
%%%%%%%%%%%%%%%%%%%%%%%%%%
\section{Review of Anderson's theory}\label{sect:anderson}
In this section, 
we recall (bare minimum of)
results of Anderson \cite[\S 2, 3]{Anderson}.
We formulate all results
without any use of Sato Grassmannian
(which is actually central in Anderson's theory).
All results in this section are merely reformulation of loc. cit.,
but for the sake of completeness 
we include some explanation 
using Sato Grassmannian in \S \ref{sect:app}.

%%%%%%%%%%%%%%%%%%%%%%%
\subsection{Krichever pairs}\label{sect:kri}
Let $X$ be a smooth projective geometrically irreducible curve 
over a field $K$
equipped with a $K$-rational point $\infty$. 
We fix an isomorphism $N_0 : \hat{\sO}_{X,\infty} \cong K[[\tt]]$,
and write $N$ for the composition map
$\Spec K((\tt)) \to \Spec K[[\tt]] \overset{N_0}{\to} X$.
(Here $K[[\tt]]$ is the ring of power series
in $\tt$ with coefficients in $K$,
and $K((\tt))$ is its fraction field.)
An {\it $N$-trivialization} of a line bundle $\sL$ on $X$
is an isomorphism $\sig : N^*\sL\cong K((\tt))$
of $K((\tt))$-vector spaces
induced by an isomorphism
$\sig_0 : N_0^*\sL\cong K[[\tt]]$ of $K[[\tt]]$-modules. 
A pair $(\sL, \sig)$ of
a line bundle $\sL$ on $X$ and an $N$-trivialization $\sig$ of $\sL$
is called a {\it Krichever pair}.
Two Krichever pairs are said to be isomorphic
if there exists an isomorphism of line bundles
compatible with $N$-trivializations.
We write $\Kr(X, N)$ for the set of isomorphism
classes of Krichever pairs.
We have a canonical surjective map
\begin{equation*}
 [ \cdot ] : \Kr(X, N) \to \Pic(X), \qquad
 [(\sL, \sig)]  = \sL.
\end{equation*}
For each $n \in \Z$ we define 
$\Kr^n(X, N) := \{ (\sL, \sig) \in \Kr(X, N) ~|~ \deg(\sL)=n \}$
to be the inverse image of $\Pic^n(X)$ by $[ \cdot ]$.

\subsection{A Krichever pair associated to a Weil divisor}\label{sect:weildiv}
Let $D = \sum_{P \in X} n_P P$ 
be a Weil divisor on $X$.
The associated line bundle $\sO_X(D)$ admits an $N$-trivialization
$\sigma(D)$ induced by the composition
$\sO_X(D) \hookrightarrow K(X) \overset{N}{\to} K((\tt))
\overset{T^{-n_{\infty}}}{\to} K((\tt))$.
(Here $n_{\infty}$ is the coefficient of $\infty$ in $D$.)
Thus we obtain a Krichever pair $(\sO_X(D), \sigma(D))$.

\subsection{Vector space associated to a Krichever pair}\label{sect:vector}
For $(\sL,\sig) \in \Kr(X, N)$,
we define a $K$-subspace $W(\sL, \sig)$ of $K((\tt))$ by 
\begin{equation*}\label{eqn:Krichever_pair}
W(\sL, \sig) := 
\{\sig N^* f\in K((\tt))\ |\ f\in H^0(X\setminus\{\infty\}, \sL)\}.
\end{equation*}
Note that $A := W(\sO_X, N)$ is a $K$-subalgebra of $K((\tt))$
such that $\Spec A \cong X \setminus \{ \infty \}$,
and that $W(\sL, \sig)$ is an $A$-submodule of $K((\tt))$
for any $(\sL, \sig) \in \Kr(X, N)$.
The following fact is fundamental to us.
(See Proposition \ref{prop:corresp} for details.)
\begin{prop}\label{prop:fundamental}
Let $(\sL, \sig) ,(\sL', \sig') \in \Kr(X, N)$.
If $W(\sL, \sig)=W(\sL', \sig')$, then
we have $(\sL, \sig)=(\sL', \sig')$.
\end{prop}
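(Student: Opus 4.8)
The plan is to recover the Krichever pair $(\sL,\sig)$ from its associated vector space $W = W(\sL,\sig)$ by purely algebraic means. The key observation is that $W$ carries not just the abstract isomorphism class of $\sL$ but also the trivialization data, since $W$ sits concretely inside $K((\tt))$. I would proceed in two stages: first reconstruct the line bundle together with its isomorphism class, then pin down the $N$-trivialization.

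For the first stage, recall from \S\ref{sect:vector} that $A := W(\sO_X, N)$ is a $K$-subalgebra of $K((\tt))$ with $\Spec A \cong X \setminus \{\infty\}$, and that $W(\sL,\sig)$ is an $A$-submodule of $K((\tt))$. I would show that $W(\sL,\sig)$, viewed as an $A$-module, is the module of sections of $\sL$ over $X \setminus \{\infty\}$: indeed, $f \mapsto \sig N^* f$ identifies $H^0(X \setminus \{\infty\}, \sL)$ with $W(\sL,\sig)$ compatibly with the $A$-action, because $N^*$ and $\sig$ are both algebra/module homomorphisms. Since $X \setminus \{\infty\}$ is an affine curve, $\sL$ is determined up to isomorphism by this $A$-module together with the extension across $\infty$; the behavior at $\infty$ is encoded in how the $A$-module $W$ sits inside $K((\tt))$ as a lattice, i.e. in the filtration by order of vanishing in $\tt$. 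So if $W(\sL,\sig) = W(\sL',\sig')$ as subspaces of $K((\tt))$, the two $A$-modules are literally equal, hence $\sL \cong \sL'$.

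For the second stage — the genuinely delicate point — I must check that the chosen isomorphism $\sL \cong \sL'$ can be taken to respect the trivializations, i.e. that $(\sL,\sig) = (\sL',\sig')$ as Krichever pairs, not merely that the underlying bundles agree. The idea: given an isomorphism $\phi : \sL \to \sL'$ of line bundles, the composite $\sig' \circ N^*\phi \circ \sig^{-1}$ is an automorphism of $K((\tt))$ as a $1$-dimensional $K((\tt))$-vector space, hence multiplication by some $u \in K((\tt))^*$; and $u$ lies in $K[[\tt]]^*$ because both $\sig$ and $\sig'$ come from isomorphisms over $K[[\tt]]$. The equality $W(\sL,\sig) = W(\sL',\sig')$ forces $u \cdot W(\sL,\sig) = W(\sL,\sig)$, so $u$ is a unit in the ``stabilizer'' of the lattice $W$. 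One then argues that any such $u$ is already realized by an automorphism of $\sL$ itself — equivalently, that multiplication by $u$ on $K((\tt))$ descends to a global automorphism of $\sL$ — using that $X$ is projective and geometrically irreducible, so that $H^0(X,\sO_X) = K$ forces $u \in K^*$ after adjusting $\phi$, and then rescaling $\phi$ by $u^{-1}$ makes the trivializations match. The main obstacle is precisely this last descent step: controlling the stabilizer of the lattice $W$ inside $K((\tt))^*$ and showing it contributes nothing beyond scalars in $K^*$. I expect this to follow from Proposition \ref{prop:corresp}, which the excerpt promises will contain the full details, so in the body I would reduce Proposition \ref{prop:fundamental} to that more complete statement.
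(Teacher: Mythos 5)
Your argument is correct and is essentially the one the paper relies on: the paper itself gives no proof beyond deferring to Proposition \ref{prop:corresp} (i.e.\ to Anderson and Mumford), and the standard injectivity argument for the Krichever correspondence contained there is exactly your two-step reconstruction — recover $\sL$ from $W$ as an $A$-module plus the lattice data at $\infty$, then note that any discrepancy between the trivializations is multiplication by some $u\in K[[\tt]]^*$ stabilizing $W$, hence $u\in A\cap K[[\tt]]^*=H^0(X,\sO_X)^*=K^*$, which can be absorbed into the isomorphism of line bundles.
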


\subsection{Admissible basis}\label{sect:kr-w}
Let $(\sL, \sig) \in \Kr(X, N)$.
Put $W=W(\sL, \sig)$ and $i_0 := \deg(\sL)+1-g$.
It follows from the Riemann-Roch theorem that
there is a $K$-basis $\{w_i\}_{i=1}^\infty$ of $W$ such that 
\begin{itemize}
\item[(1)] $\{\deg w_i\}_{i=1}^\infty$ is a strictly increasing sequence, 
\item[(2)] $w_i$ is monic for all $i$, and 
\item[(3)] $\deg (w_i-T^{i-i_0})$ is a bounded function of $i$.
\end{itemize}
(Here $\deg : K((\tt))^* \to \Z$ is 
the sign inversion of the normalized valuation,
and $w \in K((\tt))$ is called {\it monic} iff 
$\deg(w - T^{\deg w}) < \deg(w)$.)
Such a $K$-basis $\{w_i\}_{i=1}^\infty$ of $W$ will be called {\it admissible}.
We call $i(W):=i_0$ the {\it index} of $W$.
(The integer $i_0$ can be read off from $W$,
as it is the only integer that satisfies the property (3) above.)
The {\it partition} $\kappa=(\kappa_i)_{i=1}^\infty$ of $W$ 
is a non-increasing sequence of non-negative integers defined by
$$
\kappa_i := i- i(W) - \deg(w_i),
$$
which satisfies $\kappa_i=0$ for sufficiently large $i$. 
The partition $\kappa$ 
does not depend on a choice of an admissible basis.
(Actually, it depends only on $\sL$.)
The integer $\ell(\kappa):=\max\{i\ |\ \kappa_i\neq0\}$ will be called the {\it length} of 
the partition $\kappa$.
(See also comments after \eqref{eq:index}.)

\subsection{Group structure}\label{sect:groupstructure}
We regard $\Kr(X, N)$ as an abelian group by the tensor product,
so that the identity element is given by $(\sO_X, N)$.
Note that $[ \cdot ] : \Kr(X, N) \to \Pic(X)$ is a group homomorphism.
Take $(\sL, \sig), (\sL', \sig) \in \Kr(X, N)$
and let $(\sL'', \sig'')=(\sL \otimes \sL', \sig \otimes \sig')$
be their product.
Then
$W(\sL'', \sig'')$ coincides with the $K$-subspace of $K((\tt))$
spanned by 
$\{ ww' \in K((\tt)) ~|~ w \in W(\sL, \sig), w' \in W(\sL', \sig') \}$.

\subsection{Theta divisor}
Let us write $J:=\Pic^0(X)$ for the {\it Jacobian variety} of $X$.
Let us also write
$\Theta\subset J$ for the {\it theta divisor},
which is defined to be
the set of $\sL \in J$
such that $H^0(X, \sL((g-1)\infty)) \not= \{ 0 \}$.
Observe that $(\sL,\sig) \in \Kr^0(X, N)$
satisfies $\sL \in \Theta$ if and only if 
$$
 W(\sL, \sig) \cap T^{g-1}K[[\tt]] \neq\{0\},
$$
because there is an isomorphism
$
W(\sL, \sig) \cap T^{g-1}K[[\tt]] \cong H^0(X, \sL((g-1)\infty)).
$
(This is a key property which enables one
to interpret $\Theta$ as the `zero-locus' of the tau function.)

\subsection{Automorphism of a curve}\label{subsec:auto}
Suppose we are given 
two endomorphisms $r$ and $\bar{r}$ of $K$-schemes 
which fit in the commutative diagram
\begin{equation*}\label{eq:action}
\xymatrix{
 \Spec K((\tt)) \ar[r]^(0.68){N} \ar[d]_{\bar{r}} & X\ar[d]^{r} \\
 \Spec K((\tt)) \ar[r]^(0.68){N} & X.
}
\end{equation*}
In particular, it holds $r(\infty)=\infty$.
Then, for $(\sL, \sig) \in \Kr(X, N)$,
the composition
\[(r, \bar{r})^* \sig :  N^* r^* \sL \cong \bar{r}^* N^* \sL 
  \overset{\bar{r}^* \sigma}{\cong} 
  \bar{r}^* K((\tt)) = K((\tt))
\]
is an $N$-trivialization of $r^* \sL$.
(Here the last equality holds since
$\bar{r}$ induces 
an isomorphism $\bar{r}^* : K((\tt)) \to K((\tt))$ ).
Therefore we get an induced homomorphism
\[
  \Kr(X, N) \to \Kr(X, N), \qquad
  (\sL, \sig) \mapsto (r^* \sL, (r, \bar{r})^* \sig),
\]
which, by abuse of notation, will be denoted by $r^*$.
This homomorphism is compatible with $[ \cdot ]$
in the sense that $[r^*(\sL, \sig)]=r^* \sL$.

%%%%%%%%%%%%%%%%%%%%%%%%%%
\subsection{The $p$-adic analytic part of Krichever pairs}\label{sect:analy}
From now on, we assume
$p$ is a prime number and
$K$ is a finite extension of the field $\Q_p$ of $p$-adic numbers.
Let $|\cdot|$ the absolute value on $K$ such that $|p|=p^{-1}$.
Let $H(K)$ be the ring defined by 
\begin{equation*}
H(K):=\left\{ \sum_{i=-\infty}^{\infty}a_iT^i \ \biggm|\ 
a_i\in K,\ 
\sup_{i=-\infty}^{\infty}|a_i|<\infty,\ 
\lim_{i\to\infty}|a_i|=0  \right\}_.
\end{equation*}
Note that $H(K)$ is equipped with  the norm 
\begin{equation*}
\left\|\sum_i a_i T^i\right\| := \sup_{i}|a_i|,
\end{equation*}
and $(H(K), \| \cdot \|)$ is a $p$-adic Banach algebra over $K$ .

We write $\Kr_{\an}(X, N)$ for the subset of $\Kr(X, N)$
consisting of all Krichever pairs $(\sL, \sig)$
such that $W(\sL, \sig)$
admits an admissible basis $\{w_i\}$ satisfying
\begin{enumerate}
\item
$w_i \in H(K)$ for all $i$, and 
\item
$\|w_i\|=1$ for almost all $i$. 
\end{enumerate}
For each $n \in \Z$, we put 
$\Kr_{\an}^n(X, N) = \Kr_{\an}(X, N) \cap \Kr^n(X, N)$.

For $(\sL, \sig) \in \Kr_{\an}(X, N)$, 
we write $\bar{W}(\sL, \sig)$ for the closure of 
$W(\sL, \sig)$ in $H(K)$.
One recovers $W(\sL, \sig)$ from $\bar{W}(\sL, \sig)$
by $W(\sL, \sig) = \bar{W}(\sL, \sig) \cap K((\tt))$.
(Here we regard both $H(K)$ and $K((\tt))$ as
$K$-vector subspaces of $\prod_{i \in \Z} K T^i$.)
Hence the following proposition is a consequence of
Proposition \ref{prop:fundamental}.
\begin{prop}\label{prop:fundamental2}
Let $(\sL, \sig) ,(\sL', \sig') \in \Kr_{\an}(X, N)$.
If $\bar{W}(\sL, \sig)=\bar{W}(\sL', \sig')$, then
we have $(\sL, \sig)=(\sL', \sig')$.
\end{prop}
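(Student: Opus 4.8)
The plan is to read off Proposition~\ref{prop:fundamental2} from Proposition~\ref{prop:fundamental} together with the recovery identity recorded just before the statement, namely
\[
  W(\sL,\sig)=\bar W(\sL,\sig)\cap K((\tt))
\]
(intersection taken inside $\prod_{i\in\Z}KT^i$). Granting this, the argument is immediate: if $\bar W(\sL,\sig)=\bar W(\sL',\sig')$, then intersecting both sides with $K((\tt))$ gives $W(\sL,\sig)=W(\sL',\sig')$, and Proposition~\ref{prop:fundamental} yields $(\sL,\sig)=(\sL',\sig')$. So the entire content lies in justifying the recovery identity, which I indicate next.

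The inclusion $W(\sL,\sig)\subseteq\bar W(\sL,\sig)\cap K((\tt))$ is clear: by definition of $\Kr_{\an}(X,N)$ the space $W:=W(\sL,\sig)$ admits an admissible basis $\{w_i\}$ with $w_i\in H(K)$, so $W$, being the $K$-span of the $w_i$, lies in $H(K)$; it is also contained in $K((\tt))$ and in its own closure $\bar W$. For the reverse inclusion I would take $0\neq f\in\bar W\cap K((\tt))$, write $f=\lim_n f_n$ with $f_n\in W$ in the norm $\|\cdot\|$, and argue by descending induction on $d:=\deg f$. Using that the coefficients of the $f_n$ converge coefficientwise to those of $f$, that $f$ has support bounded above by $d$, and that for large $i$ one has $w_i=T^{i-i(W)}+(\text{terms of degree}\le C,\ \|\cdot\|\le1)$ with $C$ the uniform bound from property (3) of an admissible basis, one shows first that $d$ belongs to the degree set of $W$, and then that subtracting the appropriate scalar multiple of the admissible basis vector of degree $d$ produces an element of $\bar W\cap K((\tt))$ of strictly smaller degree. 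Since the degree set of $W$ is bounded below, finitely many such steps exhibit $f$ as a finite $K$-linear combination of the $w_i$, i.e.\ $f\in W$.

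The step I expect to be the main obstacle is exactly this reverse inclusion: showing that a limit formed in $H(K)$ which happens to be an honest finite-tailed Laurent series already belongs to the \emph{non-closed} subspace $W$. The delicate point is the interplay of the norm topology on $H(K)$ with the degree filtration on $K((\tt))$, together with the bookkeeping forced by the partition $\kappa$ of $W$: for the finitely many indices $i$ with $\kappa_i\neq0$ the leading normalization of $w_i$ is not $T^{i-i(W)}$, so one must make sure the peeling-off procedure terminates and that no coefficient is pushed to a degree outside the allowed support — controlling this is precisely where property (3) of an admissible basis and condition (2) defining $\Kr_{\an}(X,N)$ are used. Once the recovery identity is secured, Proposition~\ref{prop:fundamental} finishes the proof with no further work.
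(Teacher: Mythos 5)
Your argument is exactly the paper's: the proposition is deduced from the recovery identity $W(\sL,\sig)=\bar W(\sL,\sig)\cap K((\tt))$ together with Proposition~\ref{prop:fundamental}, which is all the paper says. The paper does not prove the recovery identity either (it is implicit in Anderson's results on $\Gr^{\an}(K)\hookrightarrow\Gr^{\alg}(K)$ recalled in the appendix), so your sketch of the reverse inclusion is extra work beyond what the paper records, and it correctly identifies where the real content lies.
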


%%%%%%%%%%%%%%%%%%%%%%%%%%
\subsection{The $p$-adic loop group}
We define
the {\it $p$-adic loop group} $\Gam(K)$ 
to be the subgroup of $H(K)^\times$ 
consisting of all
$\sum_i h_iT^i \in H(K)^\times$ such that 
$|h_0|=1$,  
$|h_i|\le1$ for all $i\le0$, and 
there exists a real number $0<\rho<1$ such that 
\begin{equation*}
|h_i|\le \rho^i \quad \textrm{for\ all\ }i\ge1.
\end{equation*}
Define the subgroups $\Gam_+(K)$ and $\Gam_-(K)$ of $\Gam(K)$ by 
\begin{eqnarray*}
\Gam_+(K)&:=&\left\{\sum_i h_iT^i\in \Gam(K) \biggm| h_0=1,h_i=0 \ (i<0) \right\}_,\\
\Gam_-(K)&:=&\left\{\sum_i h_iT^i\in \Gam(K) \biggm| h_i=0 \ (i>0) \right\}_.
\end{eqnarray*}

\begin{prop}[{\cite[\S 3.3]{Anderson}}; see also \S \ref{sect:final} below]
\label{prop:loopgroupaction}
There is an action of $\Gam(K)$ on $\Kr_{\an}(X, N)$
characterized by the following property:
for any $h \in \Gam(K)$ and $(\sL, \sig) \in \Kr_{\an}(X, N)$,
we have $\bar{W}(h(\sL, \sig))=h\bar{W}(\sL, \sig)$.
(Here the right hand side means 
$\{ hw ~|~ w \in \bar{W}(\sL, \sig) \}$.)
Moreover, this action satisfies the following properties:
\begin{enumerate}
\item For any 
$h \in \Gam(K)$ and $(\sL, \sig) \in \Kr_{\an}(X, N)$,
we have $\deg [h(\sL, \sig)] = \deg [(\sL, \sig)]$.
\item For any 
$h \in \Gam_-(K)$ and $(\sL, \sig) \in \Kr_{\an}(X, N)$,
we have $[h(\sL, \sig)] = [(\sL, \sig)]$.
\item Suppose $(\sO_X, N) \in \Kr_{\an}(X, N)$.
For any $h \in \bar{W}(\sO_X, N) \cap \Gam(K)$ 
and $(\sL, \sig) \in \Kr_{\an}(X, N)$,
we have $[h(\sL, \sig)] = [(\sL, \sig)]$.
\end{enumerate}
\end{prop}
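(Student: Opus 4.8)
The plan is to construct the action of $\Gam(K)$ on $\Kr_{\an}(X,N)$ by first acting on the associated closed subspaces $\bar W(\sL,\sig)\subset H(K)$, then descending this to Krichever pairs via Proposition \ref{prop:fundamental2}, and finally verifying the degree properties (1)--(3) by reducing to the generating subgroups $\Gam_{\pm}(K)$. First I would show that for $h\in\Gam(K)$ and $(\sL,\sig)\in\Kr_{\an}(X,N)$, the set $h\bar W(\sL,\sig)=\{hw\mid w\in\bar W(\sL,\sig)\}$ is again of the form $\bar W(\sL',\sig')$ for a unique Krichever pair $(\sL',\sig')\in\Kr_{\an}(X,N)$. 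Uniqueness is immediate from Proposition \ref{prop:fundamental2}; existence is the substantive point. The idea is to produce a new admissible basis: starting from an admissible basis $\{w_i\}$ of $W(\sL,\sig)$ with $w_i\in H(K)$ and $\|w_i\|=1$ for almost all $i$, multiply by $h$ and perform a (possibly infinite, but $p$-adically convergent) column reduction to restore conditions (1)--(3) of \S\ref{sect:kr-w}; the constraints defining $\Gam(K)$ (namely $|h_0|=1$, $|h_i|\le1$ for $i\le0$, and geometric decay $|h_i|\le\rho^i$ for $i\ge1$) are exactly what guarantee that $h$ preserves $H(K)$, that multiplication by $h$ is a bounded invertible operator with bounded inverse, and that the reduction process converges in the norm $\|\cdot\|$. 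This gives a well-defined map $(\sL,\sig)\mapsto h(\sL,\sig)$, and the group action axioms $(hh')(\sL,\sig)=h(h'(\sL,\sig))$, $1\cdot(\sL,\sig)=(\sL,\sig)$ follow formally from the characterizing property $\bar W(h(\sL,\sig))=h\bar W(\sL,\sig)$ together with the uniqueness in Proposition \ref{prop:fundamental2}.

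Next I would address the degree statements. For (1), the degree of $[(\sL,\sig)]$ is encoded in the index $i(W)$ via $i_0=\deg(\sL)+1-g$, and the index can be characterized intrinsically from $\bar W$ (for instance through the behavior of $\bar W\cap T^m H(K)$ and its complement as $m$ varies). Multiplication by an element $h\in\Gam(K)$ with $|h_0|=1$ is "degree-preserving" on leading terms in an appropriate filtered sense, so it does not change the index; hence $\deg[h(\sL,\sig)]=\deg[(\sL,\sig)]$. For (2), an element $h\in\Gam_-(K)$ is a unit in a completion of the local ring at $\infty$ (it has the form $\sum_{i\le0}h_iT^i$ with $|h_0|=1$), so multiplication by $h$ changes only the trivialization $\sig$ and not the underlying line bundle $\sL$ — concretely, $h$ lies in the image of $\hat\sO_{X,\infty}^{\times}$ under $N_0^*$ up to the normalization, so $h\bar W(\sL,\sig)=\bar W(\sL,h\sig)$ with $[(\sL,h\sig)]=\sL$. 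For (3), if $h\in\bar W(\sO_X,N)\cap\Gam(K)$, then $h$ is (a limit of) a global function on $X\setminus\{\infty\}$, i.e. a section of $\sO_X(n\infty)$ for some $n$; multiplying $\bar W(\sL,\sig)$ by such an $h$ corresponds to twisting $\sL$ by $\div(h)$, which is supported at $\infty$, so again the line bundle class is unchanged. Here the hypothesis $(\sO_X,N)\in\Kr_{\an}(X,N)$ is what guarantees $\bar W(\sO_X,N)$ is defined and that the multiplication stays within the analytic locus.

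The main obstacle I expect is the existence part of the first step: showing that $h\bar W(\sL,\sig)$ is genuinely of the form $\bar W(\sL',\sig')$ for a bona fide Krichever pair, and in particular that the column-reduction that restores the admissible-basis normalization converges $p$-adically and leaves us still inside $\Kr_{\an}$ (conditions (1) and (2) defining that set). This is precisely the technical heart of Anderson's construction; since the statement is quoted from \cite[\S 3.3]{Anderson} and a self-contained treatment via the Sato Grassmannian is deferred to \S\ref{sect:final}, I would cite that reference for the convergence estimates rather than reproving them, and focus the written proof on the bookkeeping that translates the Grassmannian-level action back into the language of Krichever pairs and on the three degree assertions, which are comparatively formal once the action is in hand.
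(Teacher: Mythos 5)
Your proposal takes essentially the same route as the paper: the paper offers no proof of Proposition~\ref{prop:loopgroupaction} beyond citing \cite[\S 3.3]{Anderson} (as made explicit in \S\ref{sect:final}), and you likewise defer the technical core --- the convergence of the normalization producing an admissible basis for $h\bar{W}(\sL,\sig)$ --- to that reference, supplementing it with formal bookkeeping that is consistent with the Grassmannian dictionary of \S\ref{sect:app}. One small correction to your sketch of (3): for $h$ corresponding to a function $g$ regular on $X\setminus\{\infty\}$, the divisor $\div(g)$ is \emph{not} supported at $\infty$; the class $[\sL]$ is unchanged because $\div(g)$ is principal (so $\sO_X(\div(g))\cong\sO_X$), and the passage to limits $h\in\bar{W}(\sO_X,N)\cap\Gam(K)$ still requires Anderson's continuity argument.
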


%%%%%%%%%%%%%%%%%%%%%%%%%
\subsection{Dwork loops and Anderson's theorem}
In his study of the $p$-adic properties of 
zeta functions of hypersurfaces over finite fields 
(see, for example, \cite{Dwork}),
Dwork constructed a special element of $\Gam(K)$
(which we call a Dwork loop).
We shall exploit his construction.
Assume that $K$ contains a $(p-1)$-st root $\pi$ of $-p$. 
Let $u$ be a unit of the integer ring of $K$.
A {\it Dwork loop} is defined by 
\begin{equation*}\label{eqn:Dwork} 
h(T):=\exp(\pi((uT)-(uT)^p)).
\end{equation*}
For all $i\ge0$, we have (see, for example \cite[Chapter I]{Koblitz}) 
$$
|h_i| \le | p |^{i(p-1)/p^2},
$$
where $h(T)=\sum_i h_iT^i$.
Therefore $h(T)\in \Gam_+(K)$.

The following theorem,
which is a consequence of a delicate analysis of 
Anderson's $p$-adic tau-function,
is technically crucial in \cite{Anderson}.
(See also \S \ref{sect:final}.)
\begin{thm}[{\cite[Lemma 3.5.1]{Anderson}}]
\label{thm:Dwork_loop}
Assume that  $p\ge7$.
Let $h$ be a Dwork loop
and $(\sL, \sigma) \in \Kr_{\an}^0(X, N)$.
We write $\kappa=(\kappa_i)_{i=1}^\infty$ and $\ell(\kappa)$
for the partition of $W(\sL, \sigma)$ 
and the length of $\kappa$. 
Assume further $W(\sL, \sigma)$ satisfies that 
\begin{itemize}
\item[(A1)] there exists an admissible basis 
$\{w_i\}_{i=1}^\infty$ such that 
$w_i \in H(K)$ and
$\|w_i\|=1$ for all $i\ge1$,
\item[(A2)] the partition 
$\kappa$ satisfies $\max\{\kappa_1,\ell(\kappa)\}<p/4$.
\end{itemize}
Then, we have
$W(h(\sL, \sigma))\cap T^{g-1}K[[\tt]]=\{0\}$.
Equivalently, we have
$$
[h(\sL, \sig)] \not\in \Theta.
$$ 
\end{thm}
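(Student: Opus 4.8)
The plan is to follow the strategy of \cite[\S3]{Anderson}: realize the condition ``$[h(\sL,\sig)]\in\Theta$'' as the vanishing of a $p$-adic tau function, and then estimate that number. Writing $W=W(\sL,\sig)$, Proposition~\ref{prop:loopgroupaction} gives $\bar W(h(\sL,\sig))=h\bar W$; combined with the identity $W(\sL',\sig')=\bar W(\sL',\sig')\cap K((\tt))$ and the characterization of $\Theta$ recalled above, this shows that $[h(\sL,\sig)]\in\Theta$ holds if and only if $h\bar W\cap T^{g-1}K[[\tt]]\neq\{0\}$. Since $\deg\sL=0$, an admissible basis $\{w_i\}_{i\ge1}$ of $W$ as in (A1) exhibits $\bar W$ as a bounded perturbation of $\bigoplus_{j\ge g}KT^{j}$, which is a topological complement of $T^{g-1}K[[\tt]]$ in $H(K)$. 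Hence the composite
\[ \varphi_h\colon \bar W\xrightarrow{\;h\;}H(K)\twoheadrightarrow H(K)/T^{g-1}K[[\tt]]\cong\textstyle\bigoplus_{j\ge g}KT^{j} \]
is a Fredholm-type operator whose determinant $\tau_W(h)\in K$, computed in the basis $\{w_i\}$ and the monomial basis, is well defined up to a unit, and ``$[h(\sL,\sig)]\notin\Theta$'' is equivalent to ``$\tau_W(h)\neq0$''. This $\tau_W$ is Anderson's $p$-adic tau function.

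Next I would expand $\tau_W(h)$ as a sum over bijections $\beta\colon\{i\ge1\}\to\{j\ge g\}$ that coincide with $\beta_0\colon i\mapsto i+g-1$ for all but finitely many $i$. The entries of $\varphi_h$ are the Fourier coefficients of the products $hw_i$, which by (A1) lie in the valuation ring of $K$, so $|\tau_W(h)|\le1$. The term indexed by $\beta_0$ is $\prod_{i\ge1}(hw_i)_{i+g-1}$; using that $w_i$ is monic of degree $i+g-1-\kappa_i$ and that $h=1+h_1T+\cdots$ with $|h_a|\le|p|^{a(p-1)/p^2}$, one finds that this factor equals $1$ when $\kappa_i=0$ and equals $h_{\kappa_i}$ plus strictly smaller terms otherwise, so the $\beta_0$-term has a fixed absolute value --- a power of $|p|$ read off from $\kappa$. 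For any other $\beta$, estimating each $(hw_i)_{\beta(i)}$ again by $|h_a|\le|p|^{a(p-1)/p^2}$, one checks that the exponent of $|p|$ occurring in the $\beta$-term is at least that occurring in the $\beta_0$-term, with a gap that is a nonnegative quantity assembled from $\kappa$ and the displacement of $\beta$.

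The hard part will be to show that this gap is \emph{strictly} positive for every $\beta\neq\beta_0$ (and that the infinite sum converges $p$-adically, so that the Fredholm determinant is legitimate): this is exactly where the hypothesis (A2), $\max\{\kappa_1,\ell(\kappa)\}<p/4$, and the bound $p\ge7$ are used, and it is the delicate combinatorial and $p$-adic estimate underlying \cite[Lemma~3.5.1]{Anderson}. Granting it, the ultrametric inequality yields $|\tau_W(h)|=|\beta_0\text{-term}|\neq0$, which is the assertion $[h(\sL,\sig)]\notin\Theta$. Everything before that estimate is formal bookkeeping with Krichever pairs, so the whole difficulty is concentrated in that single step.
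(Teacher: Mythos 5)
The paper gives no proof of this statement: it is imported verbatim from Anderson (\cite[Lemma 3.5.1]{Anderson}), and \S\ref{sect:final} only records that Anderson's proof proceeds by a careful estimate of a $p$-adic tau function. Your sketch correctly reconstructs the skeleton of that argument --- reducing ``$[h(\sL,\sig)]\in\Theta$'' to the vanishing of a Fredholm determinant of the composite $\bar W\xrightarrow{\;h\;}H(K)\to H(K)/T^{g-1}K[[\tt]]$, expanding it over bijections $\beta$ agreeing with $\beta_0(i)=i+g-1$ almost everywhere, and comparing the $\beta_0$-term with the rest ultrametrically --- and this is consistent with the route the paper attributes to Anderson.

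As a proof, however, it is incomplete at exactly the point that carries all the content. First, even the claim that the $\beta_0$-term has a \emph{determined} nonzero absolute value needs more than the stated bound $|h_a|\le|p|^{a(p-1)/p^2}$: that is only an upper bound, and to conclude $|(hw_i)_{i+g-1}|=|h_{\kappa_i}|$ one needs the exact valuations of the coefficients of the Dwork exponential (or a matching lower bound at the indices $\kappa_i$), which you do not supply. Second, and decisively, the strict domination of the $\beta_0$-term over every $\beta\neq\beta_0$ --- the only step in which the hypotheses (A2), $\max\{\kappa_1,\ell(\kappa)\}<p/4$, and $p\ge7$ are used at all --- is explicitly ``granted.'' Since that estimate \emph{is} the theorem, what you have written is an accurate expansion of the citation rather than a proof; to close the gap you must either reproduce Anderson's combinatorial estimate of the tau function or, as the paper does, cite \cite[Lemma 3.5.1]{Anderson} for it.
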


\section{Geometry of a hyperelliptic curve}\label{sect:geometry}

%%%%%%%%%%%%%%%%%%%%%%%%%%%%%

In this section, 
we use the notations introduced in \S \ref{sect:setting}.

%\subsection{A hyperelliptic curve}
%Let $g>1$ be an integer.
%Let $K$ be a field of characteristic zero,
%and assume that $K$ contains a primitive $4g$-th root $\zeta$ of unity.
%Let $X$ be the hyperelliptic curve  given by the equation
%\eqref{eqn:hyperelliptic}:
%$$
%y^2=x^{2g+1}+x.
%$$
%There is an automorphism $r$ of $X$ of order $4g$
%given by
%$r(x,y)=(\zeta^2x,\zeta y)$.
%Let $G:=\lan r \ran \cong \Z/4g\Z$ 
%be the subgroup of $\Aut(X)$ generated by $r$.
%Note that $r(\infty)=\infty$,
%where $\infty \in X(K)$ is the point 
%at which the functions $x$ and $y$ have poles.

\subsection{Singular homology}
In this subsection we assume $K$ is a subfield of $\C$.
The singular homology $H_1(X(\C), \Z)$
is a free $\Z$-module of rank $2g$
on which $G$ acts linearly.
Let $\rho : G \to \Aut(H_1(X(\C), \Z))$
be the corresponding representation.
Let $\chi : G \to \mu_{4g}$ be the character
given by $\chi(r)=\zeta$.

\begin{lem}\label{lem:singularhom}
The representation $\rho \otimes \C$ is equivalent to
$\oplus_{i=1, 3, \cdots, 4g-1} \chi^i$.
In particular, 
the minimal polynomial of 
$\rho(r)$ is $F(X) := X^{2g}+1$.
\end{lem}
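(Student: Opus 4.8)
The plan is to compute the action of $G$ on $H_1(X(\C),\Z)\otimes\C$ by diagonalizing it through the well-known description of the homology (equivalently, the $1$-forms) of a superelliptic curve. First I would pass to cohomology: by the comparison isomorphism $H_1(X(\C),\C)^\vee\cong H^1_{\dR}(X/\C)\cong H^0(X,\Omega^1_X)\oplus\overline{H^0(X,\Omega^1_X)}$, so it suffices to identify the $r^*$-eigenvalues on the $g$-dimensional space $H^0(X,\Omega^1_X)$ of holomorphic differentials; the eigenvalues on $H_1\otimes\C$ are then these together with their complex conjugates (inverses, since they are roots of unity). For the hyperelliptic curve $y^2=x^{2g+1}+x=f(x)$, a standard basis of $H^0(X,\Omega^1_X)$ is given by $\omega_j:=x^{j-1}\,dx/y$ for $j=1,\dots,g$. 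Since $r(x,y)=(\zeta^2 x,-\zeta y)$, one computes $r^*\omega_j = \zeta^{2(j-1)}\cdot\zeta^2\cdot(-\zeta)^{-1}\,x^{j-1}dx/y = -\zeta^{2j-1}\omega_j$. Thus $\omega_j$ is an eigenvector with eigenvalue $-\zeta^{2j-1}=\zeta^{2g}\zeta^{2j-1}=\zeta^{2g+2j-1}$ (using $\zeta^{2g}=-1$, as $\zeta$ is a primitive $4g$-th root of unity). So $H^0(X,\Omega^1_X)$ carries the characters $\chi^{2g+1},\chi^{2g+3},\dots,\chi^{4g-1}$.

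Next I would assemble the full representation. The conjugate/dual characters of $\chi^{2g+2j-1}$ are $\chi^{-(2g+2j-1)}=\chi^{4g-2g-2j+1}=\chi^{2g-2j+1}$; as $j$ runs over $1,\dots,g$ these give $\chi^{2g-1},\chi^{2g-3},\dots,\chi^{1}$. Therefore
\[
\rho\otimes\C \;\cong\; \bigoplus_{j=1}^{g}\chi^{2g+2j-1}\;\oplus\;\bigoplus_{j=1}^{g}\chi^{2g-2j+1}
\;=\;\bigoplus_{\substack{i=1\\ i\ \mathrm{odd}}}^{4g-1}\chi^{i},
\]
which is the first assertion. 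For the minimal polynomial: $\rho(r)$ acts with eigenvalues exactly the odd powers $\zeta^i$ for $i=1,3,\dots,4g-1$, i.e.\ precisely the primitive $4g$-th roots of unity together with all odd-order... more precisely the set $\{\zeta^i : i\text{ odd}\}$, which is exactly the set of $2g$-th roots of $-1$ (the roots of $X^{2g}+1$). Since these $2g$ values are distinct, $\rho(r)$ is diagonalizable with minimal polynomial equal to $\prod_{i\ \mathrm{odd}}(X-\zeta^i)=X^{2g}+1=F(X)$.

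The only genuine point requiring care — and the main obstacle, though a minor one — is the bookkeeping with the sign: getting the eigenvalue of $r^*$ on $dx/y$ right, namely that $r^*(dx/y)=\zeta^2\cdot(-\zeta)^{-1}(dx/y)=-\zeta(dx/y)$, and then correctly converting $-\zeta$ into an odd power of $\zeta$ via $\zeta^{2g}=-1$. One should also note that this argument is insensitive to the choice of subfield $K\subseteq\C$, since the differentials $\omega_j$ and the automorphism $r$ are algebraic and defined over $\Q(\zeta)$; the topological input is only that $H^1_{\dR}$ over $\C$ splits as holomorphic plus antiholomorphic forms. Everything else is the routine verification that the $2g$ listed characters are pairwise distinct, which is immediate because the exponents $1,3,\dots,4g-1$ are distinct modulo $4g$.
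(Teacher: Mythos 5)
Your proposal is correct and follows essentially the same route as the paper: compute $r^*\omega_j=-\zeta^{2j-1}\omega_j$ on the standard basis of $H^0(X,\Omega^1_X)$ and obtain $H_1(X(\C),\Z)\otimes\C$ as this representation plus its dual/conjugate, yielding all odd characters $\chi^i$ and hence the minimal polynomial $X^{2g}+1$. The extra bookkeeping you carry out (rewriting $-\zeta^{2j-1}$ as $\zeta^{2g+2j-1}$ and listing the dual exponents) is exactly the ``direct computation'' the paper leaves implicit.
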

\begin{proof}
We consider a
$\C[G]$-module
$V=H^0(X, \Omega_{X/\C}^1) 
= \lan w_i =x^{i-1}dx/y ~|~ i=1, \cdots, g\ran_{\C}$.
A direct computation shows $r^*(w_i)=-\zeta^{2i-1}w_i$.
The lemma follows from an isomorphism
\[ H_1(X(\C), \Z) \otimes \C  \cong V \oplus \Hom(V, \C) \]
of $\C[G]$-modules.
\end{proof}

%%%%%%%%%%%%%%%%%%%%%%%%%%%%%
\subsection{Good trivialization}
The following is an easy consequence of Hensel's lemma:
\begin{lem}\label{lem:local_parameter}
There exists a unique element $u(T) \in 1+\tt\Z[[\tt]]$ such that 
\[ u(T)^{2g} - u(T)^{2g-1} + (\tt)^{4g}=0. \]
\end{lem}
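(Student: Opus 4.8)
Lemma \ref{lem:local_parameter} asserts that the equation
\[
F(u) := u^{2g} - u^{2g-1} + T^{-4g} = 0
\]
has a unique solution $u(T) \in 1 + T^{-1}\Z[[T^{-1}]]$. Since throughout this section we work with the local parameter $T^{-1}$ at $\infty$, the natural plan is to apply Hensel's lemma in the complete local ring $\Z[[T^{-1}]]$ with respect to the ideal $(T^{-1})$.

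**The plan.** First I would reduce modulo $T^{-1}$: setting $T^{-1}=0$ the equation becomes $u^{2g}-u^{2g-1}=u^{2g-1}(u-1)=0$, so modulo $(T^{-1})$ there are exactly two roots in $\Z$, namely $u\equiv 0$ and $u\equiv 1$. We are after the one lying in $1+T^{-1}\Z[[T^{-1}]]$, i.e. the lift of the root $u_0 = 1$. To invoke Hensel I need the derivative $F'(u) = 2g\,u^{2g-1} - (2g-1)u^{2g-2}$ to be a unit at $u_0=1$ in the residue ring $\Z[[T^{-1}]]/(T^{-1}) = \Z$: indeed $F'(1) = 2g - (2g-1) = 1$, which is a unit in $\Z$. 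Hence the standard Hensel/Newton iteration in the $(T^{-1})$-adically complete ring $\Z[[T^{-1}]]$ produces a unique $u(T)\in\Z[[T^{-1}]]$ with $F(u(T))=0$ and $u(T)\equiv 1 \pmod{T^{-1}}$; that last congruence is exactly the statement $u(T)\in 1+T^{-1}\Z[[T^{-1}]]$.

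**On uniqueness and integrality.** The uniqueness claim is the part to state carefully: Hensel's lemma gives uniqueness among roots congruent to $1$ modulo $(T^{-1})$, and since any element of $1+T^{-1}\Z[[T^{-1}]]$ is by definition $\equiv 1 \pmod{T^{-1}}$, this is precisely the uniqueness asserted. The integrality of the coefficients is automatic because we are iterating inside $\Z[[T^{-1}]]$, not $\Q[[T^{-1}]]$: the Newton step $u \mapsto u - F(u)/F'(u)$ stays in $\Z[[T^{-1}]]$ because $F'(u)$ is a unit there (its constant term is $1$, up to higher-order corrections once we know $u\equiv 1$, so $F'(u)\in 1 + T^{-1}\Z[[T^{-1}]] \subset \Z[[T^{-1}]]^\times$). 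Alternatively one can solve for the coefficients $u(T) = 1 + \sum_{n\geq 1} c_n T^{-n}$ recursively, checking at each stage that $c_n\in\Z$ by an induction that uses $F'(1)=1$ to invert the leading relation.

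**Main obstacle.** There is essentially no obstacle here: the whole content is that $F'(1)=1$, which makes Hensel's lemma apply cleanly over $\Z$ rather than only over $\Z_p$ or $\Q$. The only point requiring a word of care is making sure the ambient ring is $\Z[[T^{-1}]]$ (so that "integer coefficients" and "unique in $1+T^{-1}\Z[[T^{-1}]]$" both come out for free), and noting that the other Hensel root (lifting $u_0=0$) is discarded. So I would simply write: "Apply Hensel's lemma to $F(u)=u^{2g}-u^{2g-1}+T^{-4g}$ over the complete local ring $\Z[[T^{-1}]]$ at the root $u_0=1$ of $F \bmod (T^{-1})$; since $F'(1)=1$ is a unit, there is a unique lift $u(T)\in 1+T^{-1}\Z[[T^{-1}]]$," and leave it at that.
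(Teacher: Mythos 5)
Your proof is correct and follows exactly the route the paper intends: the paper gives no argument beyond the phrase ``an easy consequence of Hensel's lemma,'' and your verification that $F(1)=T^{-4g}\equiv 0$ and $F'(1)=2g-(2g-1)=1$ is a unit in the $(T^{-1})$-adically complete ring $\Z[[T^{-1}]]$ is precisely the intended application. Nothing is missing.
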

We define two elements $x(T), y(T) \in \Z[[\tt]][T]$ by
$$
x(T) := T^2 u(T), \qquad 
y(T) := -Tx(T)^g.
$$
Note that
$x(T) \equiv T^2 \mod T\Z[[\tt]]$
and 
$y(T) \equiv -T^{2g+1} \mod T^{2g}\Z[[\tt]]$.
It follows from Lemma \ref{lem:local_parameter} that 
$(T^{-2} x(T))^{2g} - (T^{-2} x(T))^{2g-1} + (\tt)^{4g}=0$.
By multiplying $T^{4g}x(T)$, we get 
%$x(T)^{2g+1}- T^2 x(T)^{2g}+x(T) = 0$, namely
$$ 
y(T)^2 = x(T)^{2g+1} + x(T).
$$
Therefore we can define an injection
$K(X) \hookrightarrow  K((\tt))$
of $K$-algebras 
by associating $x$ and $y$ with $x(T)$ and $y(T)$ respectively.
This induces an isomorphism
$N_0 : \hat{\sO}_{X, \infty} \cong K[[\tt]]$,
and we can apply the results of \S \ref{sect:anderson}.
Note that $A := W(\sO_X, N)$ is the $K$-subalgebra of $K((\tt))$
generated by $x(T)$ and $y(T)$.

\subsection{Admissible basis of $A$}
\label{remk:A-basis}
We construct a $K$-basis $\{ w_i \}_{i=1}^\infty$ of $A$ such that 
\begin{enumerate}
\item $w_i \in \Z[[T^{-1}]][T]$ for all $i$,
\item $w_i - T^{2i-2} \in T^{2i-3}\Z[[T^{-1}]]$ for all $i \leq g+1$, and
\item $w_i - T^{i-1+g} \in T^{2g}\Z[[T^{-1}]]$ for all $i \geq g+2$.
\end{enumerate}
In particular,
$\{ w_i \}$ is admissible in the sense of \S \ref{sect:kr-w}.
First we put
$$
u_{i}= 
\left\{
\begin{array}{ll}
x(T)^{i-1} & (1 \le i \le g), \\
x(T)^{g+(i-g-1)/2} & (i >g,  ~i \not\equiv g \mod 2), \\
-y(T)x(T)^{(i-g-2)/2} & (i>g, ~i \equiv g \mod 2 ). \\
\end{array}
\right.  
$$
Note that $u_i \in \Z[[T^{-1}]][T]$ for all $i$
and $\{ u_i \}$ is a $K$-basis of $A$.
We set $w_i = u_i$ for $i \leq g+1$.
Suppose we have constructed $w_1, \cdots, w_{i-1}$ for some $i \geq g+2$.
There exists  $\delta \in \lan w_1, \cdots, w_{i-1} \ran_{\Z}$
such that $u_i - T^{i-1+g} - \delta \in T^{2g}\Z[[T^{-1}]]$.
We then set $w_i := u_i - \delta$.
Note that the partition of $A$ is 
$$
(g, g-1,\cdots , 2,1,0,0,\cdots),
$$
and its length is $g$.

%%%%%%%%%%%%%%%%%%%%%%%%%%%%%%
\subsection{Two-torsion points}\label{sect:twotorsion}
For any $\sL \in J[2]$, 
we shall construct an $N$-trivialization $\sig$ of $\sL$
such that $W(\sL, \sig)$ admits an admissible basis 
$\{w_i \}$ satisfying 
$w_i \in \Z[\zeta][[T^{-1}]][T]$ for all $i$.

Recall that the Weierstrass points on $X$ are
$$
\infty,\ P_0=(0,0),\ \textrm{and} \ P_i=(\zeta^{2i-1},0)\quad (1\le i\le 2g).  
$$
It is proved in \cite[Chapter III, \S2]{MumfordII} that 
the two-torsion subgroup $J[2]$ of $J$ 
consists of line bundles associated to Weil divisors
$$ 
D_I :=\sum_{i\in I}(P_i-\infty),\quad I\subset\{0,1,\cdots,2g\},\ |I|\le g. 
$$
For a subset  $I \subset\{0,1,\cdots,2g\}$ such that $s:=|I|\le g$,
we get a Krichever pair
$(\sL_I, \sig_I) := (\sO_X(D_I), \sig(D_I))$
by the construction in \S \ref{sect:weildiv}.
We further set $L_I := W(\sL_I, \sig_I)$.

We construct a basis $\{w_{I,i} \}_{i=1}^\infty$ of $L_I$ as follows: 
define an element $f_I$ of $H^0(X\setminus\{ \infty \}, \sL_I) \subset K(x,y)$ by
$$
f_I:=y\prod_{j\in I}(x-x(P_j))^{-1}.
$$
Note that the divisor of $f_I$ satisfies   
$$\div(f_I)=
 \sum_{j\not\in I}P_j -\sum_{j\in I}P_j - (2g-2s+1)\infty.
$$
Now we define 
for $1\le i\le g-s$,
$$ 
u_{I,i} := T^s x(T)^{i-1}
$$
and for $1\le i$,
$$
u_{I, g-s+i}= 
\left\{
\begin{array}{ll}
T^sx(T)^{g-s+(i-1)/2} & (i:\text{odd}) \\
T^sf_I(T)x(T)^{(i-2)/2} & (i:\text{even}), \\
\end{array}
\right.  
$$
where 
$f_I(T)$ is the image of $f_I$ by the embedding $N^*: K(x,y) \hookrightarrow K((\tt))$. 
One sees that 
$$
\deg(u_{I, i})= 
\left\{
\begin{array}{ll}
2i-2+s& (1\le i \le g-s) \\
i+g-1 & (g-s <i ). \\
\end{array}
\right.  
$$
Therefore $\{u_{I,i} \}_{i=1}^\infty$ is a $K$-basis of $L_I$
such that $u_{I, i} \in \Z[\zeta][[T^{-1}]][T]$ for all $i$.
Now we can produce an admissible  basis  $\{ w_{I, i} \}$ of $L_I$ 
with required properties
by the same procedure as \S \ref{remk:A-basis}.
Note that  the partition of $L_I$ is 
\begin{equation*}
(g-s, g-s-1, \cdots, 2, 1, 0 , 0, \cdots),
\end{equation*}
and the length of the partition is $g-s$.

%%%%%%%%%%%%%%%%%%%%%%%%%%%%%
\subsection{Points of degree one}\label{sect:degreeone}
We fix a non-Weierstrass point $Q\in X(K)$. 
Let $(\sL_Q, \sig_Q)$ be the Krichever pair
associated to the Weil divisor $Q-\infty$ 
under the construction in \S \ref{sect:weildiv}.
We are going to construct 
an admissible basis  $\{w_{Q, i} \}$ 
of $L_Q := W(\sL_Q, \sig_Q)$
satisfying $w_{Q, i} \in \Z[x(Q), y(Q)][[T^{-1}]][T]$ for all $i$.

We define a function $f_Q \in
H^0(X\setminus \{ \infty \}, \sL_Q)\subset K(x,y)$:
$$
f_Q:=l_Q\cdot(x-x(Q))^{-1}, \qquad
l_{Q}:= y-x+y(Q)+x(Q).
$$
A straightforward computation shows that
$\div(f_Q) + Q+(2g-1)\infty$
is an effective divisor of degree $2g$.
We construct a basis $\{u_{Q,i} \}_{i=1}^\infty$ of $L_Q$ as follows: 
for $1\le i\le g$,
$$ 
u_{Q,i} := Tx(T)^{i-1}
$$
for $1\le i$,
$$
u_{Q, g+i}:= 
\left\{
\begin{array}{ll}
Tf_Q(T)x(T)^{(i-1)/2} & (i:\text{odd}) \\
Tx(T)^{g+(i-2)/2} & (i:\text{even} ), \\
\end{array}
\right.  
$$
where 
$f_Q(T)$ 
is the image of $f_Q$ in $K((\tt))$ by the embedding $N^*$. 
Note that 
$f_Q(T)$ belongs to $\Z[x(Q), y(Q)][[T^{-1}]][T]$,
hence so does $u_{Q, i}(T)$.
(Here we used a fact that an element 
$\sum_{i=-\infty}^{n} c_i T^i \in \Z[x(Q), y(Q)][[T^{-1}]][T]$
with $c_n \not= 0$ is invertible
if and only if $c_n \in \Z[x(Q), y(Q)]^*$.)
One sees that $$
\deg(u_{Q, i})= 
\left\{
\begin{array}{ll}
2i-1 & (1\le i \le g) \\
i+g-1 & (g<i ). \\
\end{array}
\right.  
$$
Therefore $\{u_{Q,i} \}_{i=1}^\infty$ is a $K$-basis of $L_Q$
such that $u_{Q, i} \in \Z[x(Q), y(Q)][[\tt]][T]$ for all $i$.
Now we can produce an admissible  basis  $\{ w_{Q, i} \}$ of $L_Q$ 
with required properties
by the same procedure as \S \ref{remk:A-basis}.
Note that  the partition of $L_Q$ is 
\begin{equation*}
(g-1, g-2, \cdots, 1, 0 , 0, \cdots),
\end{equation*}
and its length is $g-1$.

\subsection{Action of $G$ on $\Kr(X, N)$}\label{sect:actionofg}
We define a $K$-algebra automorphism $\bar{r}$
on $K((\tt))$ by
\begin{equation*}
\bar{r}\left(\sum_i a_iT^i\right):=\sum_i a_i(\zeta T)^i.
\end{equation*}
Then the diagram
\[
\xymatrix{
 \Spec K((\tt)) \ar[r]^(0.68){N} \ar[d]_{\bar{r}} & X\ar[d]^{r} \\
 \Spec K((\tt)) \ar[r]^(0.68){N} & X.
}
\]
commutes.
By \S \ref{subsec:auto},
we get an induced action of $G$ on $\Kr(X, N)$.
It holds that $W(r(\sL, \sigma)) = \bar{r}(W(\sL, \sig))
(:= \{ \bar{r}(w) ~|~ w \in W(\sL, \sig) \}$).

\subsection{Remark on the simplicity of Jacobian}\label{sect:otsubo}
{\footnote{This remark is communicated to us by Noriyuki Otsubo.}}
(The result of this subsection will not be used in the sequel.)
We suppose $K$ is an algebraically closed field.
We deduce from
a result of Aoki \cite{Aoki} that the Jacobian variety of $X$ 
is simple as an abelian variety,
at least when $g>45$.
To see this,
let $X'$ be a smooth projective curve over $K$
defined by $s^{4g}=t(1-t)$.
Note that the curve $X'$ is a quotient of the Fermat curve of degree $4g$.
There exists a degree two map
$\pi : X' \to X$
 given by $x=c^2s^2, ~y=c(2t-1)s$,
where $c=(-4)^{1/4g}$.
Aoki's result \cite{Aoki} shows that
the Jacobian variety of $X'$
has exactly two simple factors, provided $g>45$.
The existence of $\pi$ shows that 
the Jacobian variety of $X$ must be one of two simple factors.

\section{Proof of main theorem}
We keep the notation and assumption in \S \ref{sect:geometry}.
Let $p$ be a prime number such that 
$$
p \equiv 1 \mod 4g.
$$
Let $\wp$ be a prime ideal of $\Z[\zeta]$ lying above $p$. 
 Since the hyperelliptic curve \eqref{eqn:hyperelliptic} is defined over $\Q(\zeta)$, 
 we may assume that $K$ is a finite extension of $\Q_p$ such that $\wp = \Z[\zeta] \cap p\Z_p$ in $K$.
We further assume that $K$
contains all elements of $J[p]$ and 
$(p-1)$-st roots of all rational integers. 

\subsection{$p$-torsion of the Jacobian}
Note that $\F_p$ contains all the $4g$-th roots of unity.
Put $\bar\zeta := \zeta \mod\wp \in \F_p$. 
Choosing an embedding $\bar{\Q}_p \hookrightarrow \C$,
we get an isomorphism $J[p] \cong H_1(X(\C), \Z) \otimes \F_p$.
The representation $\rho_p : G \to \Aut(J[p])$ 
is thus equivalent to $\rho \otimes \F_p$.
Therefore Lemma \ref{lem:singularhom} implies the following:

\begin{lem}\label{lem:decomp}
The minimal polynomial of 
$\rho_p(r)$ is 
$$
F(X)\mod p\  = \prod_{i=1, 3, \cdots, 4g-1} (X-\bar{\zeta}^i).$$
Consequently, we have
\[  J[p] = \bigoplus_{i=1, 3, \cdots, 4g-1} J[p]^{\chi^i}, 
\quad
\dim_{\F_p} J[p]^{\chi^i}=1 ~~(i=1, 3, \cdots, 4g-1).
\]
Here, by abuse of notation,
we write $\chi^i$ for the composition 
$G \overset{\chi^i}{\to} \mu_{4g} \hookrightarrow 
\Z_p^* \overset{\mod p}{\twoheadrightarrow} \F_p^*$.
\end{lem}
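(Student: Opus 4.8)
The plan is to deduce everything from Lemma \ref{lem:singularhom} by a standard ``comparison and reduction'' argument. First I would fix, as the statement already prescribes, an embedding $\bar\Q_p \hookrightarrow \C$; this is legitimate because $X$, $\infty$ and the $G$-action are all defined over $\Q(\zeta) \subset \bar\Q_p$. Via this embedding the \'etale cohomology (equivalently, the Tate module mod $p$) of $X_{\bar\Q_p}$ is identified $G$-equivariantly with the singular homology $H_1(X(\C),\Z)$ of the corresponding complex curve, tensored with $\F_p$; concretely, the comparison isomorphism $H_1^{\mathrm{et}}(X_{\bar\Q_p},\Z/p) \cong H_1(X(\C),\Z)\otimes\F_p$ together with the Kummer-sequence identification $J[p] \cong H_1^{\mathrm{et}}(X_{\bar\Q_p},\Z/p)$ (using that $\operatorname{char} K = 0$ and $\mu_p \subset K$) gives a $G$-equivariant isomorphism $J[p] \cong H_1(X(\C),\Z)\otimes\F_p$. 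Hence the representation $\rho_p$ of $G$ on $J[p]$ is the reduction mod $p$ of the integral representation $\rho$ on $H_1(X(\C),\Z)$.

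Next I would compute the minimal polynomial of $\rho_p(r)$. By Lemma \ref{lem:singularhom}, over $\C$ (hence over $\Z$ on a suitable lattice, since $\rho$ is defined integrally and $\rho(r)$ has order dividing $4g$) the operator $\rho(r)$ is annihilated by $F(X) = X^{2g}+1$, and in fact $F$ is its minimal polynomial because all $2g$ eigenvalues $\zeta, \zeta^3,\dots,\zeta^{4g-1}$ (the primitive $4g$-th roots of unity) are distinct and are exactly the roots of $X^{2g}+1$. Reducing mod $p$: since $p\equiv 1 \bmod 4g$, the polynomial $X^{4g}-1$ splits into distinct linear factors over $\F_p$, so $\bar\zeta$ is a primitive $4g$-th root of unity in $\F_p^*$ and the $\bar\zeta^i$ for $i=1,3,\dots,4g-1$ are $2g$ distinct elements; therefore $F(X)\bmod p = \prod_{i=1,3,\dots,4g-1}(X-\bar\zeta^i)$ is separable, and $\rho_p(r)$ (being the reduction of $\rho(r)$) is annihilated by it. Separability forces this to be the minimal polynomial: if a proper divisor annihilated $\rho_p(r)$, lifting back would contradict the fact that $F$ is the minimal polynomial of $\rho(r)$ over the $p$-adic integers (here one uses that $H_1(X(\C),\Z)$ is a faithful-enough module, i.e. that no proper divisor of $F$ kills $\rho$, which is immediate from the eigenvalue description).

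Because $F(X)\bmod p$ is separable and splits completely over $\F_p$, the $\F_p[G]$-module $J[p]$ decomposes as the direct sum of its eigenspaces $J[p]^{\chi^i} = \ker(\rho_p(r)-\bar\zeta^i)$ for $i = 1,3,\dots,4g-1$. To get $\dim_{\F_p} J[p]^{\chi^i} = 1$ for each such $i$, I would again invoke Lemma \ref{lem:singularhom}: over $\C$ each character $\chi^i$ ($i$ odd) occurs in $\rho\otimes\C$ with multiplicity exactly one, so each eigenvalue $\zeta^i$ of $\rho(r)$ has multiplicity one; reduction mod $p$ preserves these multiplicities since the characteristic polynomial of $\rho_p(r)$ is the mod-$p$ reduction of that of $\rho(r)$, namely $\prod_{i \text{ odd}}(X-\bar\zeta^i)$, which has each root simple. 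Hence $\dim_{\F_p}\bigoplus_i J[p]^{\chi^i} = 2g = \dim_{\F_p} J[p]$ with each summand one-dimensional, giving the displayed decomposition.

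The only genuinely delicate point is the very first one: justifying the $G$-equivariant identification $J[p]\cong H_1(X(\C),\Z)\otimes\F_p$, i.e. that reduction from the complex/topological picture to the $p$-torsion of the Jacobian is compatible with the $G$-action. Everything after that is formal linear algebra over $\F_p$ (diagonalizability from a separable split annihilating polynomial, and tracking multiplicities through reduction). I expect the write-up to be short precisely because this comparison is standard; the paper in fact states it as already done (``we get an isomorphism $J[p]\cong H_1(X(\C),\Z)\otimes\F_p$''), so the proof reduces to citing Lemma \ref{lem:singularhom} and observing $p\equiv 1\bmod 4g$.
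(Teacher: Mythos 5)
Your proposal is correct and follows essentially the same route as the paper: the paper also fixes an embedding $\bar{\Q}_p\hookrightarrow\C$ to obtain a $G$-equivariant identification $J[p]\cong H_1(X(\C),\Z)\otimes\F_p$, so that $\rho_p\cong\rho\otimes\F_p$, and then deduces the splitting and one-dimensionality of each eigenspace from Lemma \ref{lem:singularhom} together with the fact that $p\equiv 1\bmod 4g$ makes $F(X)\bmod p$ split into distinct linear factors. Your extra remarks (Kummer sequence, preservation of the characteristic polynomial under reduction) merely spell out details the paper leaves implicit.
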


\subsection{An auxiliary lemma}\label{sect:auxlemma}

The following lemma plays an important role in our proof for constructing $p$-torsion points.
This is the crucial point
where we need to assume $X$ to be a special curve
given by the equation \eqref{eqn:hyperelliptic}.
See Remark \ref{rem:addedcomment} below.

\begin{lem}\label{lem:aux}
We have an equation
\begin{equation}\label{eqn:T^p}
T^p - e_0 T = a(T) + g(T)
\end{equation}
for some $e_0 \in \Z_{(p)}^*$, 
$a(T)\in A \cap \Z[[\tt]][T]$ and $g(T)\in\tt \Z[[\tt]]$.
\end{lem}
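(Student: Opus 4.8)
I would obtain \eqref{eqn:T^p} by a degree-reduction of $T^p$ modulo $A$, and then pin down $e_0$ modulo $p$ by a residue pairing against a regular differential. The observation that makes the reduction work is that, setting $s:=(\tt)^{4g}=T^{-4g}$, Lemma \ref{lem:local_parameter} becomes $u^{2g}-u^{2g-1}+s=0$, which involves only $u$ and $s$; solving recursively for coefficients shows $u(T)\in 1+s\Z[[s]]$. Hence $x(T)\in T^{2}\Z[[s]]$ and $y(T)=-T^{2g+1}u(T)^{g}\in T^{2g+1}\Z[[s]]$, and for each $d$ in the Weierstrass semigroup $S=\{0,2,\dots,2g\}\cup\{2g,2g+1,2g+2,\dots\}$ of $X$ at $\infty$ the monic element $\mu_d:=T^{d}u(T)^{\lfloor d/2\rfloor}\in A\cap\Z[[\tt]][T]$ --- equal to $x(T)^{d/2}$ for $d$ even, and to $-x(T)^{(d-2g-1)/2}y(T)$ for $d$ odd with $d\ge 2g+1$ --- has degree $d$ and satisfies $\mu_d-T^{d}\in T^{d-4g}\Z[[s]]$. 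Thus subtracting an integer multiple of $\mu_d$ from an element of $\Z[[s]][T]$ of leading degree $d$ lowers the degree by a multiple of $4g$ while keeping all exponents in a fixed class modulo $4g$.

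\textbf{Existence of the decomposition.} Since $p\equiv1\bmod4g$ and $p>2g$ we have $p\in S$. Starting from $f_0:=T^p$, if $f_k\in\Z[[s]][T]$ has leading degree $d_k$ then $d_k\equiv1\bmod4g$; if $d_k>1$ then $d_k\ge4g+1>2g$, hence $d_k$ is an (odd) element of $S$, and I set $f_{k+1}:=f_k-c_k\mu_{d_k}$ with $c_k\in\Z$ the leading coefficient of $f_k$, which strictly lowers the leading degree. The essential --- and only --- place where the special curve \eqref{eqn:hyperelliptic} enters is the elementary fact that $1$ is the unique non-negative integer $\equiv1\bmod4g$ lying outside $S$; so the reduction never stalls, and it terminates once the leading degree is $\le1$, giving $f_K=e_0T+g(T)$ with $e_0\in\Z$ and $g(T)\in\tt\Z[[\tt]]$ (there is no constant term because $0\not\equiv1\bmod4g$). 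Then $a(T):=T^p-f_K=\sum_kc_k\mu_{d_k}\in A\cap\Z[[\tt]][T]$, and \eqref{eqn:T^p} holds.

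\textbf{Invertibility of $e_0$.} Pairing \eqref{eqn:T^p} with the regular differential $\omega_0:=x^{g-1}\,dx/y$ on $X$ and taking $\operatorname{res}_\infty$ annihilates the $a(T)$-term (by the residue theorem, as $a(T)\omega_0$ is regular on $X\setminus\{\infty\}$) and the $g(T)$-term (as $\omega_0$ is regular at $\infty$), so $e_0\operatorname{res}_\infty(T\omega_0)=\operatorname{res}_\infty(T^p\omega_0)$. Differentiating the equation of $u$ gives $u'(s)=-\bigl(u(T)^{2g-2}(2gu(T)-(2g-1))\bigr)^{-1}$, and a short computation with $t:=\tt$ then yields the closed form $\omega_0=\tfrac{2}{2gu(T)-(2g-1)}\,dt$. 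Hence $\operatorname{res}_\infty(T\omega_0)=2$, and with $m:=(p-1)/4g$ and $V(s):=\bigl(2gu(T)-(2g-1)\bigr)^{-1}\in1+s\Z[[s]]$ one gets $e_0=[s^m]V(s)$. Applying Lagrange inversion to the functional equation $s=u(T)^{2g-1}(1-u(T))$ gives $e_0=\binom{2gm}{m}=\binom{(p-1)/2}{m}$; since $1\le m<(p-1)/2<p$, neither the numerator $\tfrac{p-1}{2}\cdot\bigl(\tfrac{p-1}{2}-1\bigr)\cdots\bigl(\tfrac{p-1}{2}-m+1\bigr)$ nor $m!$ is divisible by $p$, so $e_0\in\Z_{(p)}^{*}$.

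\textbf{Main obstacle.} The conceptual heart is the combinatorial fact used in the existence step --- that for \eqref{eqn:hyperelliptic} the semigroup $S$ omits only the residue $1$ among non-negative integers $\equiv1\bmod4g$ --- which is exactly what forces the reduction to terminate with the single correction term $e_0T$ rather than a more complicated expression. The most calculational points are the closed form for $\omega_0$ and the Lagrange-inversion evaluation of $e_0$; I expect the former (reducing $x^{g-1}dx/y$ to $\tfrac{2}{2gu-(2g-1)}dt$ via the relation satisfied by $u$) to be the trickiest single step.
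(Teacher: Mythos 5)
Your proof is correct, but it reaches \eqref{eqn:T^p} by a genuinely different route than the paper. The paper's argument is a single explicit identity: writing $p=4gp'+1$ and using $y^2=x^{2g+1}+x$ together with $T=-y/x^g$, one has $(x+x^{1-2g})^{2gp'}=(-y/x^g)^{p-1}=T^{p-1}$, and expanding the left side binomially in powers of $x^{2g}$ splits $T^{p-1}$ into a part in $x^{2g}\Z[x^{2g}]$ (giving $a$ after multiplying by $T$), the constant term $e_0=\binom{2gp'}{p'}$, and a part in $x^{-2g}\Z[x^{-2g}]$ (giving $g$); existence, integrality, and the unit-ness of $e_0$ all drop out at once from the single fact $\binom{(p-1)/2}{p'}\not\equiv 0 \bmod p$. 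You instead split the problem: a reduction of $T^p$ against monic integral elements of $A$ indexed by the Weierstrass semigroup (using that $1$ is the only non-negative gap in the class $1\bmod 4g$, and that $u\in 1+s\Z[[s]]$ keeps everything in $\Z$ and in a fixed residue class of exponents), followed by a residue computation against $x^{g-1}dx/y$ and Lagrange inversion to identify $e_0=\binom{2gm}{m}$ with $m=(p-1)/4g$ --- the same number as the paper's. I checked the delicate steps: the closed form $\omega_0=\frac{2}{2gu-(2g-1)}\,dt$ is correct (it follows from $su'/u=(u-1)/(2gu-(2g-1))$), and the Lagrange-inversion evaluation indeed yields $\binom{2gm}{m}$ (e.g.\ via $ds/du=-u^{2g-2}(2gu-(2g-1))$, which turns $[s^m]V$ into $[w^m](1-w)^{-1-(2g-1)m}$). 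What your approach buys is worth noting: the existence step is exactly the general mechanism described in Remark \ref{rem:addedcomment}, and your residue/Lagrange-inversion identification of $e_0$ is a concrete instance of the ``general method to detect if $e_0$ is a unit'' that that remark poses as an open problem --- at the cost of being considerably longer than the paper's one-line identity, which exploits the special shape of \eqref{eqn:hyperelliptic} more directly.
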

\begin{proof}
Setting $p=4gp'+1$,
we write
$$ 
x^{2gp'} (1+x^{-2g})^{2gp'} = e_+(x) + e_0 + e_-(x) 
$$
where 
$e_{\pm}(x) \in x^{\pm 2g}\Z[x^{\pm 2g}]$, respectively,  and $e_0 \in \Z$.
Note that 
$e_0 = 
\begin{pmatrix} 2gp' \\ p' \end{pmatrix}$
is a $p$-adic unit.
We compute
\begin{align*}
e_+(x) + e_0 + e_-(x) &= x^{2gp'} (1+x^{-2g})^{2gp'}
 = (x+x^{1-2g})^{2gp'}
\\
 &= \left(\frac{x^{2g+1}+x}{x^{2g}}\right)^{2gp'}
 = \left(\frac{y^2}{x^{2g}}\right)^{2gp'}
= \left(\frac{-y}{x^{g}}\right)^{p-1}.
\end{align*}
Recalling $y(T)=-Tx(T)^g$,
we get an equation in $K((\tt))$
\begin{equation*}
T^p - e_0 T = a(T) + g(T)
\end{equation*}
where 
$a(T) := -y(T) {e_+(x(T))}/{x(T)^g}$ 
and $g(T) := T e_-(x(T))$.
Observe that $a(T)$ is in the image of $A=K[x, y]$ in $K((\tt))$
(since $e_+(x) \in x^{2g}\Z[x]$) 
and that $g(T) \in \tt \Z[[\tt]]$.
\end{proof}

\begin{remk}\label{rem:addedcomment}
\footnote{This remark is communicated to us by Shinichi Kobayashi.}
If one does not care much about integrality of the coefficients,
the decomposition \eqref{eqn:T^p}
holds under weaker assumptions.
To see this, 
using the notation in \S 2,
we consider a direct sum decomposition
\begin{equation}\label{eq:directsum}
 K((\tt)) =
 A \oplus K[[\tt]] \tt \oplus (\bigoplus_{i=1}^g K T^{w_i}), 
\end{equation}
where $w_1=1 < w_2 < \cdots <w_g<2g$ is the
{\it Weierstrass gap sequence}.
Thus we can write 
$T^p = a(T) + g(T) + \sum_{i=1}^g e_{i-1} T^{w_i}$
with $a(T) \in A, ~g(T) \in K[[\tt]] \tt$ 
and $e_0, \cdots, e_{g-1} \in K$.
Suppose that the automorphism $\bar{r}$ in \S 2.7
satisfies $\bar{r}(T) = \zeta T$ for a
primitive $n$-th root of unity $\zeta$
such that $p \equiv 1 \mod n$ and $n \geq 2g$.
Then, 
since the decomposition \eqref{eq:directsum} is
preserved by the action of $\bar{r}$,
one has $e_1=\cdots=e_{g-1}=0$
and $T^p = a(T) + g(T) + e_0 T$.

However, in order to prove 
that $e_0$ is a $p$-adic unit
(which is important for our purpose),
we had to proceed by concrete construction
given above.
It seems to be an interesting problem to find
 a general method to detect if $e_0$ is a unit.
We hope to come back to this point in future work.
(It is also important that
the coefficients of $a(T)$ and $g(T)$ are
$p$-adically integral.)
\end{remk}

%%%%%%%%%%%%%%%%%%%%%%%%%%%%
\subsection{Decomposition of a Dwork loop}\label{sect:decdwork}
The result of \S \ref{remk:A-basis}
shows that $(\sO_X, N) \in \Kr_{\an}(X, N)$.
Recall that $\bar{A}:=\bar{W}(\sO_X, N)$
is the closure of $A=W(\sO_X, N)$ in $H(K)$.
Let $\pi$ and $\vep_0$ be $(p-1)$-st roots  
of $-p$ and $1/e_0$ respectively,
where $e_0 \in \Z_{(p)}^*$ is the number appearing in Lemma \ref{lem:aux}.
(They belong to $K$ by the assumption 
made at the beginning of this section.)
We define a Dwork loop
\begin{align*}
h_D(T):=& 
\exp(\pi((\vep_0T)-(\vep_0T)^p))\\
=&\exp( -\pi \vep_0^p(T^p-e_0T)).
\end{align*}
We write $\omega : \F_p^* \to \mu_{p-1} \subset \Z_p^*$
for the Teichm\"uller character
so that $\omega(i) \equiv i \mod p$.
For $i \in \Z$, we set $\omega(i) = \omega(i \mod p)$.
If we replace $\vep_0$ by $\omega(i) \vep_0$
for some $i \in \Z$,
then $h_D(T)$ will be changed to another Dwork loop
$h_D(\omega(i) T) \in \Gamma_+(K)$.

\begin{prop}\label{prop:comp_loop}
\begin{enumerate}
\item
There exist 
$h_A \in \bar{A}\cap \Gam(K)$ 
and $h_- \in\Gam_-(K)$ 
such that
$$
h_{D}(T)^p=h_A(T) h_-(T).
$$
\item Let $i \in \Z$.
There exist 
$h_{A, i}\in \bar{A}\cap \Gam(K)$ 
and $h_{-, i}\in\Gam_-(K)$ 
such that
$$
h_{D}(\omega(i)T) h_{D}(T)^{-i}=h_{A, i}(T) h_{-, i}(T).
$$

\end{enumerate}
\end{prop}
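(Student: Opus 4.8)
The statement to prove is Proposition~\ref{prop:comp_loop}, which asserts that a power (resp. a Teichm\"uller-twisted combination) of the Dwork loop $h_D$ decomposes as a product of an element of $\bar A \cap \Gam(K)$ and an element of $\Gam_-(K)$. The whole point is to exploit Lemma~\ref{lem:aux}: the equation $T^p - e_0 T = a(T) + g(T)$ with $a(T) \in A \cap \Z[[\tt]][T]$ and $g(T) \in \tt\Z[[\tt]]$ splits the ``positive-degree part'' $T^p - e_0 T$ of the exponent of $h_D$ into an $A$-part and a $\Gam_-$-part. Since $h_D(T) = \exp(-\pi\vep_0^p(T^p - e_0 T))$, I would simply substitute the decomposition of $T^p - e_0 T$ into the exponential and use the multiplicativity of $\exp$ on commuting arguments.

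\emph{Step 1 (part (1)).} Write
\begin{equation*}
 h_D(T)^p = \exp\bigl(-p\pi\vep_0^p(T^p - e_0 T)\bigr)
 = \exp\bigl(-p\pi\vep_0^p\, a(T)\bigr)\cdot \exp\bigl(-p\pi\vep_0^p\, g(T)\bigr),
\end{equation*}
using that $a(T)$ and $g(T)$ commute in the commutative ring $H(K)$. Set $h_A(T) := \exp(-p\pi\vep_0^p\, a(T))$ and $h_-(T) := \exp(-p\pi\vep_0^p\, g(T))$. I then need three things: (i) both exponentials converge in $H(K)$; (ii) $h_A \in \bar A \cap \Gam(K)$; (iii) $h_- \in \Gam_-(K)$. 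For convergence, the key gain is the extra factor of $p$ coming from raising to the $p$-th power: $|p\pi\vep_0^p| = |p|\cdot|p|^{1/(p-1)} = |p|^{p/(p-1)}$, which is small enough that $\exp$ of $p\pi\vep_0^p$ times a power-bounded (indeed $\Z[[\tt]][T]$- or $\tt\Z[[\tt]]$-valued) element converges in $H(K)$ — this is the standard Dwork-type estimate, the same one invoked just before Theorem~\ref{thm:Dwork_loop}. For (ii): $a(T) \in A$, so each power $a(T)^n$ lies in $A$ (an algebra), hence the partial sums of the exponential lie in $A$ and the limit lies in its closure $\bar A$; the condition $|h_i| \le \rho^i$ for $i \ge 1$ and $|h_0|=1$ needed for membership in $\Gam(K)$ follows from the same estimate together with $a(T) \in \Z[[\tt]][T]$ (so its positive part has integral, hence norm-$\le 1$, coefficients and the leading term drives exponential decay of the tail after inversion). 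For (iii): $g(T) \in \tt\Z[[\tt]]$ has only strictly-negative-degree terms with integral coefficients, so $\exp(-p\pi\vep_0^p g(T)) \in 1 + \tt\Z_p[[\tt]] \subset \Gam_-(K)$ — here convergence is even easier since $\tt\Z[[\tt]]$ is a (pro-)nilpotent-like ideal and $|p\pi\vep_0^p|<1$ suffices.

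\emph{Step 2 (part (2)).} Recall $h_D(\omega(i)T) = \exp(-\pi(\omega(i)\vep_0)^p(T^p - e_0 T))$ because $\omega(i)^p = \omega(i)$. Hence
\begin{equation*}
 h_D(\omega(i)T)\, h_D(T)^{-i}
 = \exp\bigl(-\pi(\omega(i)^p - i)\vep_0^p(T^p - e_0 T)\bigr),
\end{equation*}
and the crucial observation is that $\omega(i)^p - i = \omega(i) - i \equiv 0 \mod p$ (since $\omega(i) \equiv i \mod p$), so $\omega(i) - i = p\cdot c$ for some $c \in \Z_p$. Thus the exponent again carries a factor of $p$, exactly as in part~(1), and the identical argument — substituting $a(T) + g(T)$ for $T^p - e_0 T$ and splitting the exponential — gives $h_{A,i}(T) := \exp(-\pi p c\,\vep_0^p\, a(T)) \in \bar A \cap \Gam(K)$ and $h_{-,i}(T) := \exp(-\pi p c\,\vep_0^p\, g(T)) \in \Gam_-(K)$. (When $i=1$ both factors are $1$; the statement is only interesting for general $i$, used to handle the alternate choices of $\wp$ mentioned in Remark~\ref{rem:anderson}(3).)

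\textbf{Main obstacle.} The conceptual content is entirely in Lemma~\ref{lem:aux}, which is already proved; what remains is genuinely just the $p$-adic convergence bookkeeping. The step I expect to require the most care is verifying that $h_A$ (and $h_{A,i}$) actually lies in $\Gam(K)$ — i.e. checking the precise growth condition $|h_j| \le \rho^j$ for $j \ge 1$ with some $\rho < 1$, together with $|h_0| = 1$ and $|h_j| \le 1$ for $j \le 0$. This needs one to track how the exponential of $-p\pi\vep_0^p a(T)$, with $a(T)$ a Laurent polynomial whose top-degree term is $\deg a = \deg(T^p) = p$ with unit (indeed $\pm1$, up to sign from $-y(T)e_+/x^g$) leading coefficient, produces a series in $H(K)$ whose high-degree coefficients decay geometrically — which is exactly the Dwork-loop estimate $|h_j| \le |p|^{j(p-1)/p^2}$ cited in the text, adapted to the extra factor $p$ in the exponent. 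I would cite \cite[Chapter I]{Koblitz} / \cite{Dwork} for this estimate rather than reprove it, and note that the extra $p$ only improves all bounds, so membership in $\Gam(K)$ is immediate once the un-powered Dwork loop is known to lie in $\Gam_+(K)$.
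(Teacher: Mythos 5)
Your proposal is correct and follows essentially the same route as the paper: substitute the decomposition $T^p-e_0T=a(T)+g(T)$ of Lemma \ref{lem:aux} into the exponent, split the exponential, and use the extra factor of $p$ (from the $p$-th power, resp.\ from $\omega(i)-i\equiv 0 \bmod p$) together with the $p$-integrality of $a(T)$ and $g(T)$ to get convergence and membership in $\bar A\cap\Gam(K)$ and $\Gam_-(K)$. The paper's proof is just a terser version of your Steps 1 and 2, citing only that the radius of convergence of $\exp$ is $|\pi|$.
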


\begin{proof}
From the equation \eqref{eqn:T^p}, we have
\begin{align*}
h_D(T)^p=& 
\exp(-p\pi\vep_0^p(T^p-e_0T))\\
=&\exp(-p\pi\vep_0^pa(T))\cdot\exp(-p\pi\vep_0^p g(T)).
\end{align*}
Since $a(T)\in A \cap \Z[[\tt]][T]$ and $g(T)\in\tt \Z[[\tt]]$, we have
$$\begin{array}{l}
h_A(T):=\exp(-p\pi\vep_0^pa(T))\in \bar{A}\cap\Gam(K)\\
h_-(T):=\exp(-p\pi\vep_0^p g(T))\in\Gam_-(K),
\end{array}
$$ 
because the radius of convergence of $\exp(T)$ is $|p|^{1/(p-1)}=|\pi|$.
The first claim is proved.

Using the equation \eqref{eqn:T^p} 
and $\omega(i)^p=\omega(i)$, 
we compute
\begin{align*}
h_D(\omega(i) T)h_D(T)^{-i}=&\exp(-(\omega(i)-i)\pi \vep_0^p(T^p-e_0T))\\
=&\exp(-(\omega(i)-i)\pi\vep_0^pa(T))\cdot\exp(-(\omega(i)-i)\pi\vep_0^p g(T)).
\end{align*} 
Since $\omega(i)-i\equiv0\mod\wp$, we have
$$
\begin{array}{l}
h_{A, i}(T):=\exp(-(\omega(i)-i)\pi\vep_0^pa(T))\in \bar{A}\cap\Gam(K)\\
h_{-, i}(T):=\exp(-(\omega(i)-i)\pi\vep_0^p g(T))\in \Gam_-(K),
\end{array}
$$
and we are done.
\end{proof}

%%%%%%%%%%%%%%%%%%%%%%%%%%%%
\subsection{Construction of $p$-torsion elements}\label{sect:torsion}
Recall that we have constructed a Dwork loop $h_D(T)  \in \Gamma_+(K)$ 
in \S \ref{sect:decdwork}.
Recall also that we have defined an automorphism
$\bar{r}$ of $H(K)$ in \S \ref{sect:actionofg}
by $\bar{r}(h(T))=h(\zeta T)$.

\begin{prop}\label{prop:p-tor_Gr}
\begin{enumerate}
\item We have
$
[h_D(T) (\sO_X, N)] \in J \setminus \Theta.
$
\item We have
$\{ [h_D(\xi T) (\sO_X, N)] ~|~ \xi \in \mu_{p-1} \}
  = J[p]^{\chi} \setminus \{ 0 \}.$
\end{enumerate}
\end{prop}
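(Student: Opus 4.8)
The plan is to deduce (1) directly from Anderson's Theorem~\ref{thm:Dwork_loop}, and then to bootstrap (2) from (1) using Propositions~\ref{prop:comp_loop} and~\ref{prop:loopgroupaction}.

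For (1): by \S\ref{remk:A-basis} we have $(\sO_X,N)\in\Kr^0_{\an}(X,N)$, and $h_D(T)$ is a Dwork loop by \S\ref{sect:decdwork}. Hypothesis (A1) of Theorem~\ref{thm:Dwork_loop} holds because the admissible basis $\{w_i\}$ of $A=W(\sO_X,N)$ built in \S\ref{remk:A-basis} is monic with coefficients in $\Z$, so that $w_i\in H(K)$ and $\|w_i\|=1$ for every $i$; hypothesis (A2) holds because the partition of $A$ is $(g,g-1,\dots,1,0,0,\dots)$, whence $\max\{\kappa_1,\ell(\kappa)\}=g$, while $p\equiv 1\bmod 4g$ forces $p\ge 4g+1$, so $g<p/4$ and $p\ge 9>7$. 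Theorem~\ref{thm:Dwork_loop} then gives $[h_D(T)(\sO_X,N)]\notin\Theta$.

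For (2), the preliminary point is that the orbit map $\Phi\colon\Gam(K)\to J$, $h\mapsto[h(\sO_X,N)]$, is a group homomorphism: it takes values in $\Pic^0(X)=J$ by Proposition~\ref{prop:loopgroupaction}(1), and additivity follows from Proposition~\ref{prop:fundamental2} together with the fact that $\bar A=\bar W(\sO_X,N)$ is a $K$-subalgebra of $H(K)$ containing $1$, which forces $(hh')(\sO_X,N)$ and $h(\sO_X,N)\otimes h'(\sO_X,N)$ to have the same closed associated subspace $hh'\bar A$. Write $P:=\Phi(h_D(T))$. Applying $\Phi$ to the identity $h_D(T)^p=h_A h_-$ of Proposition~\ref{prop:comp_loop}(1) and using $\Phi(h_A)=\Phi(h_-)=0$ (Proposition~\ref{prop:loopgroupaction}(2),(3)) yields $pP=0$; since $0\in\Theta$ while $P\notin\Theta$ by (1), we get $P\in J[p]\setminus\{0\}$. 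To see $P\in J[p]^{\chi}$, recall from \S\ref{sect:actionofg} that $W(r^*(\sM,\tau))=\bar r\,W(\sM,\tau)$, hence also $\bar W(r^*(\sM,\tau))=\bar r\,\bar W(\sM,\tau)$; since $\bar r(h_D(T))=h_D(\zeta T)$, $\bar r(\bar A)=\bar A$ and $r^*(\sO_X,N)=(\sO_X,N)$, Proposition~\ref{prop:fundamental2} gives $r^*\!\bigl(h_D(T)(\sO_X,N)\bigr)=h_D(\zeta T)(\sO_X,N)$, so $r^*P=\Phi(h_D(\zeta T))$. Because $4g\mid p-1$ and $\zeta\equiv\bar\zeta\bmod\wp$, the root $\zeta$ is the Teichm\"uller representative $\omega(\bar\zeta)$; choosing an integer $i\equiv\bar\zeta\bmod p$ and applying $\Phi$ to Proposition~\ref{prop:comp_loop}(2) gives $\Phi(h_D(\zeta T))=\Phi\bigl(h_D(\omega(i)T)\bigr)=iP=\chi(r)P$. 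Thus $P\in J[p]^{\chi}\setminus\{0\}$.

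Finally, Lemma~\ref{lem:decomp} gives $\dim_{\F_p}J[p]^{\chi}=1$, so $J[p]^{\chi}\setminus\{0\}=\{iP\mid i\in\F_p^*\}$ has exactly $p-1$ elements. On the other hand, writing a general $\xi\in\mu_{p-1}$ as $\xi=\omega(i)$ with $i\in\{1,\dots,p-1\}$, the computation just given shows $[h_D(\xi T)(\sO_X,N)]=\Phi(h_D(\omega(i)T))=iP$; hence as $\xi$ runs over $\mu_{p-1}$ the left-hand side runs bijectively over $\{iP\mid i\in\F_p^*\}=J[p]^{\chi}\setminus\{0\}$, which is exactly the assertion of (2). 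The step I expect to be the main obstacle is proving that $\Phi$ is a group homomorphism, i.e.\ that the $p$-adic loop group action on $\Kr_{\an}(X,N)$ is compatible with the tensor product of Krichever pairs; this requires carefully relating $W$, $\bar W$ and the product of \S\ref{sect:groupstructure} and invoking Propositions~\ref{prop:fundamental} and~\ref{prop:fundamental2}. The remaining ingredients---the identification $\zeta=\omega(\bar\zeta)$ and the matching of each $h_D(\xi T)$ with the appropriate power of $h_D(T)$---are routine.
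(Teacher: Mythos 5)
Your proposal is correct and follows essentially the same route as the paper: part (1) via Theorem~\ref{thm:Dwork_loop} applied to $(\sO_X,N)$, and part (2) by applying the decompositions of Proposition~\ref{prop:comp_loop} together with Proposition~\ref{prop:loopgroupaction} to get $pP=0$ and $[h_D(\omega(i)T)(\sO_X,N)]=iP$, then identifying $\zeta$ with a Teichm\"uller value and invoking Lemma~\ref{lem:decomp}. The only cosmetic difference is that you package the compatibility of the loop-group action with tensor products as a homomorphism $\Gam(K)\to J$, which the paper instead verifies in the specific instances it needs (via Proposition~\ref{prop:fundamental2} and \S\ref{sect:groupstructure}), at the same level of detail.
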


\begin{proof}
(1)
Put $(\sL, \sig) := h_D(\sO_X, N)  \in \Kr(X, N)$.
By Proposition \ref{prop:loopgroupaction} (1),
we have $\deg(\sL)=0$.
The result of \S \ref{remk:A-basis} shows that
$(\sO_X, N) \in \Kr_{\an}^0(X, N)$
satisfies the assumptions (A1) and (A2)
of Theorem \ref{thm:Dwork_loop}.
It follows that $\sL \not\in \Theta$.

(2) 
We first show that $\sL \in J[p] \setminus \{ 0 \}$.
Note that (1) implies that $\sL \not= 0$.
For $K$-subspaces $V_1, \cdots, V_m$ of $H(K)$,
we write $V_1 \cdot \ldots \cdot V_m$ for the 
$K$-span of $\{ \prod_{j=1}^m u_j ~|~ u_j \in V_j \}$.
When $V=V_1=\cdots=V_m$ we write
$V^m := V \cdot \ldots \cdot V$.
Let $V=\bar{W}(\sL, \sig)$.
Proposition \ref{prop:loopgroupaction} shows that $V=h_D\bar{A}$.
Thus $V^p = h_D^p\bar{A}$.
By Proposition \ref{prop:fundamental2} 
and \S \ref{sect:groupstructure}, 
we have 
$(\sL, \sig)^{\otimes p} = h_D^p(\sO_X, N)$.
Propositions \ref{prop:comp_loop} (1)
and \ref{prop:loopgroupaction}
show $[h_D^p(\sO_X, N)]=[(\sO_X, N)]$.
We conclude $\sL^{\otimes p}=\sO_X$.

Similarly, 
Proposition \ref{prop:comp_loop} (2)
shows that for all $i \in \Z$
\[
[h_D(\om(i)T)h_D(T)^{-i}(\sO_X,N)]=[(\sO_X,N)],
\]
thus we have 
\begin{equation}\label{eq:lastentry}
[h_D(\omega(i) T) (\sO_X, N)]=[h_D(T)^i(\sO_X,N)]=\sL^{\otimes i}.
\end{equation}
In particular,
if we take $s \in \Z$ such that $\omega(s)=\zeta (=\chi(r))$,
we get
\[
r^*(\sL) 
=[\bar{r}^*(h_D(T))(\sO_X, N)]
=[h_D(\zeta T)(\sO_X, N)]
\overset{}{=}\sL^{\otimes s}
=\chi(r) \sL,
\]
This shows $\sL \in J[p]^{\chi}$
and hence $J[p]^{\chi}$ is a cyclic group of
order $p$ generated by $\sL$.
Now \eqref{eq:lastentry} completes the proof.
\end{proof}

%%%%%%%%%%%%%%%%%%%%%%%%%%%%%
\subsection{Proof of Theorem \ref{thm:maimtheorem}}
We may suppose $K$ is a finite extension of $\Q_p$
satisfying the conditions stated
at the beginning of this section.
%By Remark \ref{remk:choice_wp},
%we may also assume $i=1$.
Take $\sL \in J[2]$ and $\sL' \in J[p]^{\chi} \setminus \{ 0 \}$.
We need to show $\sL \otimes \sL' \not\in \Theta$.
By Proposition \ref{prop:p-tor_Gr},
there exists a Dwork loop $h$ such that
$\sL' = [h(\sO_X, N)]$.
By \S \ref{sect:twotorsion},
there exists an $N$-trivialization $\sig$ of $\sL$
such that $W(\sL, \sig)$ admits an admissible basis 
$\{w_i \}$ satisfying $w_i \in \Z[\zeta][[T^{-1}]][T]$ for all $i$.
Hence $(\sL, \sig)$ belongs to $\Kr_{\an}^0(X,N)$ and  satisfies the assumptions
(A1) and (A2) of Theorem \ref{thm:Dwork_loop}.
It follows that $[h(\sL, \sig)] \not\in \Theta$.
By 
Propositions \ref{prop:fundamental2},
\ref{prop:loopgroupaction} and \S \ref{sect:groupstructure},
we have 
$[h(\sL, \sig)] =  [(\sL, \sig)]\otimes[h(\sO_X, N)]  = \sL \otimes \sL'$.

\subsection{Proof of Theorem \ref{thm:maimtheorem2}}
We may assume $Q$ is a non-Weierstrass point
by Theorem \ref{thm:maimtheorem}.
Then the same proof as the previous subsection works
if we put \S \ref{sect:degreeone}
in the place of \S \ref{sect:twotorsion}.

%%%%%%%%%%%%%%%%%%%%%%%%%%
%%%%%%%%%%%%%%%%%%%%%%%%%%
\section{Appendix: Sato Grassmannian}\label{sect:app}

In this section,
we explain Anderson's theory \cite{Anderson} 
in a style much closer to his original framework.
It will be apparent that
the results in \S \ref{sect:anderson}
are the same results stated in another way.

%%%%%%%%%%%%%%%%%%%%%%%%%%%

\subsection{Sato Grassmannian}
We work under the notation and assumption 
in \S \ref{sect:kri}.
The {\it Sato Grassmannian} $\Gr^\alg(K)$ is the set of 
all $K$-subspace $V\subset K((\tt))$ such that 
the $K$-dimensions of the kernel and cokernel of the map 
\begin{equation*}
f_V : V\to K((\tt))/K[[\tt]]\ ;\ v\mapsto v+ K[[\tt]]
\end{equation*}
are finite. 
The {\it index} of $V\in\Gr^\alg(K)$  is defined by
\begin{equation}\label{eq:index}
i(V):=\dim_K \Ker(f_V)- \dim_K \Coker(f_V).
\end{equation}
(The fibers of the map $i : \Gr^{\alg}(K) \to \Z$
are considered as `connected components' of $\Gr^{\alg}(K)$,
and each connected component
admits a {\it Schubert cell decomposition} 
indexed by the set of all partitions,
but we do not need these facts.)

Recall that $A := W(\sO_X, N)$ is a $K$-subalgebra of $K((\tt))$.
For $V \in \Gr^{\alg}(K)$,
we set $A_V := \{ f \in K((\tt)) ~|~ fV \subset V \}$,
which is a $K$-subalgebra of $K((\tt))$.
We define 
\[  \Gr_A^\alg(K) := \{ V \in \Gr^{\alg}(K) ~|~ A_V = A \}. \]
For $V, V' \in \Gr_A^{\alg}(K)$, we define their product to be 
$V \cdot V' = \lan w w' ~|~ w \in V, w' \in V' \ran_K$,
under which $\Gr_A^{\alg}(K)$ becomes an abelian group.

\begin{prop}[{\cite[\S 2.3]{Anderson}; see also \cite{Mumford2}}]
\label{prop:corresp}
The construction of \S \ref{sect:vector}
defines an isomorphism of abelian groups
\[ W : \Kr(X, N) \to \Gr_A^{\alg}(K);
\qquad (\sL, \sig) \mapsto W(\sL, \sig)
\]
which satisfies the following properties:
\begin{enumerate}
\item 
We have $i(W(\sL, \sig))=\deg(\sL)+1-g$
for any $(\sL, \sig) \in \Kr(X, N)$.
\item 
For $V, V' \in \Gr^\alg_A(K)$,
one has $[W^{-1}(V)]=[W^{-1}(V')]$
if and only if $V=uV'$ for some $u \in K[[\tt]]^*$.
\end{enumerate}
\end{prop}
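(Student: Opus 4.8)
The plan is to establish the well-definedness of $W$, its bijectivity, its compatibility with the group structures, and then the two numbered properties, in that order. First I would check that $W(\sL,\sig)$ indeed lies in $\Gr^{\alg}_A(K)$: the map $f_{W(\sL,\sig)} : W(\sL,\sig) \to K((\tt))/K[[\tt]]$ has kernel isomorphic to $H^0(X,\sL)$ (global sections extending over $\infty$ with no pole) and cokernel computed by $H^1(X,\sL)$-type data, so finite-dimensionality and the index formula in (1) both come directly from Riemann--Roch: $\dim H^0(X,\sL) - \dim H^1(X,\sL) = \deg(\sL)+1-g$. That $A_{W(\sL,\sig)} = A$ follows because $W(\sL,\sig)$ is an $A$-module (noted in \S\ref{sect:vector}) and multiplication by any $f$ preserving $W(\sL,\sig)$ must be regular on $X\setminus\{\infty\}$, hence lies in $A = \Gamma(X\setminus\{\infty\},\sO_X)$ since $X\setminus\{\infty\}$ is affine.

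Next I would address injectivity, which is exactly Proposition \ref{prop:fundamental}, already available to us. For surjectivity, the key step — and I expect this to be the main obstacle — is to reconstruct, from an abstract $V \in \Gr^{\alg}_A(K)$ with $A_V = A$, a line bundle $\sL$ on $X$ together with an $N$-trivialization realizing $V$. The standard route (going back to Krichever, Mumford, Segal--Wilson, and in Anderson's formulation to \cite[\S2.3]{Anderson}) is to set $\sL := \widetilde{V}$, the quasicoherent sheaf on $X = \Spec A \cup \{\infty\}$ whose sections over $X\setminus\{\infty\}$ are $V$ (an $A$-module) and whose completion at $\infty$ is controlled by the image of $V$ in $K((\tt))$; the condition $A_V = A$ together with the finiteness built into $\Gr^{\alg}$ forces $\widetilde V$ to be a rank-one torsion-free, hence locally free, sheaf on the smooth curve $X$, and the inclusion $V \hookrightarrow K((\tt))$ supplies $\sig$. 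I would cite \cite[\S2.3]{Anderson} and \cite{Mumford2} for the details of this equivalence rather than reproving it.

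Finally I would verify the group-theoretic and residual claims. Compatibility with tensor product is \S\ref{sect:groupstructure}: the identity $W(\sL\otimes\sL',\sig\otimes\sig') = W(\sL,\sig)\cdot W(\sL',\sig')$ is recorded there, and since $W$ is a bijection intertwining the two multiplications it is a group isomorphism; the identity element $(\sO_X,N)$ maps to $A$, the identity of $\Gr^{\alg}_A(K)$. Property (1) has already been obtained above from Riemann--Roch. For property (2), two Krichever pairs have the same underlying line bundle iff they differ by an $N$-trivialization of $\sO_X$, i.e.\ by an automorphism of the trivial bundle, which is multiplication by a unit $u \in H^0(X,\sO_X^*)$; but extending over $\infty$ forces $u \in K[[\tt]]^* \cap$ (units on $X\setminus\{\infty\}$), and chasing through $W$ this says precisely $W(\sL,\sig) = u\,W(\sL',\sig')$ for some $u\in K[[\tt]]^*$. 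Conversely any such $u$ gives a Krichever pair with the same image under $[\cdot]$. This closes the proof.
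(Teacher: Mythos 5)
The paper itself gives no proof of this proposition: it is quoted from Anderson \cite{Anderson}*{\S 2.3} and Mumford \cite{Mumford2}, so there is nothing internal to compare against, and your outline is the standard Krichever-correspondence argument that those references carry out (Riemann--Roch for the index via $\Ker(f_V)\cong H^0(X,\sL)$ and $\Coker(f_V)\cong H^1(X,\sL)$; $A_V=A$ because $f$ acts $A$-linearly on the invertible $A$-module $H^0(X\setminus\{\infty\},\sL)$; reconstruction of $(\sL,\sig)$ from $V$ for surjectivity). Two points deserve correction. First, within this paper's logical order you cannot invoke Proposition \ref{prop:fundamental} for injectivity nor \S \ref{sect:groupstructure} for the product formula: both are presented in \S \ref{sect:anderson} as consequences of the present proposition (``see Proposition \ref{prop:corresp} for details''). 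Injectivity should instead come from the reconstruction you already describe for surjectivity (the inverse map $V\mapsto\widetilde V$), and the product formula from surjectivity of $H^0(X\setminus\{\infty\},\sL)\otimes H^0(X\setminus\{\infty\},\sL')\to H^0(X\setminus\{\infty\},\sL\otimes\sL')$ on the affine curve. Second, your justification of (2) is garbled: the unit $u$ is not an automorphism of the trivial bundle (a global unit, hence an element of $K^*$), but the ratio $\sig_0\circ N_0^*\phi\circ(\sig_0')^{-1}\in K[[\tt]]^*$ of the two formal trivializations after choosing an isomorphism $\phi:\sL'\to\sL$; as literally written, ``$K[[\tt]]^*\cap(\text{units on }X\setminus\{\infty\})$'' would only produce $u\in K^*$, which is too small for the stated equivalence. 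The conclusion you state is nonetheless the right one, and the converse direction ($V=uV'$ with $u\in K[[\tt]]^*$ implies equal bundles, via the pair $(\sL',u\sig')$) is handled correctly.
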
 

All results in \S \ref{sect:kri}-\ref{sect:groupstructure}
are explained by this proposition.

%%%%%%%%%%%%%%%%%%%%%%%%%%
\subsection{$p$-adic Sato Grassmannian}
Now we use the assumption and notation of \S \ref{sect:analy}.
Let $H_+(K)$ and $H_{-}(K)$ be the closed $K$-subspaces of  $H(K)$ defined by
\begin{align*}
H_+(K)&:=\left\{\sum_i a_i T^i\in H(K) \ \biggm|\ a_i=0\ (\textrm{for\ all\ }i\le0)\right\}_,\\ 
H_{-}(K)&:=\left\{\sum_i a_i T^i\in H(K) \ \biggm|\ a_i=0\ (\textrm{for\ all\ }i>0)\right\}_.
\end{align*}

The {\it $p$-adic Grassmannian } $\Gr^{\an}(K)$ is the set of 
all $K$-subspaces $\bar V\subset H(K)$
such that 
$\bar V$ is the image of a $K$-linear injective map $w : H_+(K) \to H(K)$
satisfying the following conditions:
there exist $i_0 \in \Z$,
a $K$-linear operator $v : H_+(K) \to H_-(K)$ with $\|v \|\le1$,
and
a $K$-linear endomorphism $u$ on $H_+(K)$ with $\|u\|\le1$
that is a uniform limit of bounded $K$-linear operators of finite rank
(i.e. {\it completely continuous}),
such that
the map $T^{i_0}w$ has the form 
\begin{equation*}
T^{i_0}w=
\left[ 
\begin{array}{c}
1+u \\
v \\
\end{array}
\right]
: H_+(K) \to 
\left[ 
\begin{array}{c}
H_+(K) \\
H_-(K)\\
\end{array}
\right].
\end{equation*} 
The {\it index} of $\bar V\in\Gr^{\an}(K)$, 
denoted by $i(\bar V)$, is defined by the difference of the dimensions of 
the kernel and cokernel of the projection map $\bar V\to H_+(K)$.

\begin{prop}[{\cite[\S 3.2]{Anderson}}]\label{prop:W^alg}
There is an injective map
\begin{equation*}
\Gr^{\an}(K) \hookrightarrow  \Gr^\alg(K), \qquad
 \bar V \mapsto \bar V^\alg:=\bar V \cap K((\tt)).
\end{equation*}
For any $\bar V \in \Gr^{\an}(K)$, one has $i(\bar V)=i(\bar V^{\alg})$.
For $V\in\Gr^\alg(K)$, 
there exists $\bar V\in\Gr^{\an}(K)$ such that $\bar V^\alg=V$ 
if and only if 
$V$ has  an admissible basis $\{w_i\}$ such that 
$w_i \in H(K)$ for all $i$ and $\|w_i\|=1$ for almost all $i$. 
\end{prop}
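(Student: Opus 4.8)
The plan is to read the statement as a dictionary between two bookkeeping systems for the same objects: a point of $\Gr^{\an}(K)$ is encoded by the block data $(i_0,u,v)$ with $u$ completely continuous, whereas a point of $\Gr^{\alg}(K)$ is encoded by an admissible basis as in \S\ref{sect:kr-w}. The one nontrivial input from $p$-adic functional analysis I will use is Serre's theorem: for $u$ completely continuous on $H_+(K)$ the operator $1+u$ is Fredholm of index zero, so $\dim\Ker(1+u)=\dim\Coker(1+u)<\infty$ and its image is closed.

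First I would establish well-definedness, the index equality and injectivity at once. Start from $\bar V\in\Gr^{\an}(K)$ presented by $w$, $i_0$, $u$, $v$, and reduce to $u$ of finite rank: since $u$ is a uniform limit of finite-rank operators there is a finite-rank $u_0$ with $\|u-u_0\|\le|p|$, whence $\|u_0\|\le1$ and $1+(u-u_0)$ is invertible on $H_+(K)$ with norm-$\le1$ inverse; precomposing $w$ with this inverse leaves $\bar V$ unchanged but turns $u$ into the finite-rank operator $u_0\bigl(1+(u-u_0)\bigr)^{-1}$ and $v$ into $v\bigl(1+(u-u_0)\bigr)^{-1}$, both still of norm $\le1$. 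Now $u$ has finite-dimensional range, say inside $\lan T^1,\dots,T^m\ran$, so each $u(T^k)$ is a polynomial in $T$ of degree $\le m$ and norm $\le1$; hence $w(T^k)=T^{-i_0}\bigl((1+u)(T^k)+v(T^k)\bigr)$ is a Laurent polynomial plus a bounded power series in $\tt$, so $\lan w(T^k)\mid k\ge1\ran_K\subseteq K((\tt))$, and for $k>m$ one has $w(T^k)=T^{k-i_0}+(\text{degree }\le m-i_0,\ \text{norm }\le1)$. These tail vectors are monic, of strictly increasing degrees tending to $+\infty$, of norm $1$, and differ from $T^{k-i_0}$ by a uniformly bounded amount; therefore a convergent series $\sum_kc_kw(T^k)$ (the general element of $\bar V$, with $c_k\to0$) lies in $K((\tt))$ exactly when $c_k=0$ for $k\gg0$, which gives $\bar V^{\alg}:=\bar V\cap K((\tt))=\lan w(T^k)\mid k\ge1\ran_K$. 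Gaussian elimination over $K$ on $w(T^1),\dots,w(T^m)$ together with the tail vectors yields an admissible basis of $\bar V^{\alg}$ lying in $H(K)$ and of norm $1$ in all large degrees; in particular $\bar V^{\alg}\in\Gr^{\alg}(K)$ with index $i_0$, and $i(\bar V)=i(\bar V^{\alg})$ follows using $\bar V^{\alg}\cap K[[\tt]]=\bar V\cap H_-(K)$ and a direct comparison of cokernels. Finally $\bar V^{\alg}$ is dense in $\bar V=w(H_+(K))=\{\sum_kc_kw(T^k):c_k\to0\}$ and $\bar V$ is closed (it is $T^{-i_0}$ applied to the image of $T^{i_0}w$, closed because $1+u$ is Fredholm), so $\bar V=\overline{\bar V^{\alg}}$; this proves injectivity.

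For the description of the image, one inclusion is the construction above. Conversely, let $V\in\Gr^{\alg}(K)$ have an admissible basis $\{w_i\}$ with $w_i\in H(K)$ and $\|w_i\|=1$ for $i\ge i_1$; put $i_0=i(V)$ and let $\bar V$ be the closure of $V$ in $H(K)$. For $i\ge i_1$ the element $T^{i_0}w_i$ equals $T^i$ plus a correction of norm $\le1$ whose degree is bounded by a constant $M$ independent of $i$; splitting each correction into its $H_+(K)$-part, supported in the finitely many degrees $1,\dots,M$, and its $H_-(K)$-part furnishes the values on $T^i$ ($i\gg0$) of an operator $u$ with range in $\lan T^1,\dots,T^M\ran$ and of an operator $v$. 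Treating the finitely many remaining coordinates by a further Gaussian elimination — whose contribution to $u$ is automatically of finite rank — I would assemble an injective $w\colon H_+(K)\to H(K)$ with image $\bar V$ and $T^{i_0}w$ in block form with top block $1+u$ and bottom block $v$, where $u$ is finite rank (hence completely continuous) of norm $\le1$ and $\|v\|\le1$. Then $\bar V\in\Gr^{\an}(K)$, and $\bar V\cap K((\tt))=V$ by the density argument already used.

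The step I expect to be the real obstacle is this last construction: one must check that the $u$ and $v$ extracted from the admissible basis genuinely satisfy the definition of $\Gr^{\an}(K)$ — above all the norm bounds $\|u\|\le1$ and $\|v\|\le1$, which is precisely where the hypothesis ``$\|w_i\|=1$ for almost all $i$'' enters (the finitely many exceptional basis vectors being absorbed into the finite-rank part of $u$) — and one must keep the index bookkeeping consistent with the partition/Schubert-cell combinatorics of \S\ref{sect:kr-w}. By comparison, the topological points (closedness of images, density of the algebraic part) and Serre's index-zero theorem are routine.
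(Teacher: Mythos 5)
First, a remark on the comparison itself: the paper does not prove this proposition --- it is imported verbatim from Anderson's \S 3.2 with only the citation --- so what you have written is a reconstruction of Anderson's argument rather than an alternative to anything in this paper. Your architecture is the right one (Serre's $p$-adic Fredholm theory for $1+u$, reduction to finite rank by splitting off a perturbation of norm $\le|p|$ and inverting $1+(u-u_0)$, identification of $\bar V^{\alg}$ via an adapted basis, density giving injectivity). But there is a genuine gap at the step ``Now $u$ has finite-dimensional range, say inside $\lan T^1,\dots,T^m\ran$.'' This is not a harmless normalization: a finite-dimensional subspace of $H_+(K)$ need not be spanned by monomials, nor even consist of polynomials --- the line spanned by $\sum_{i\ge1}p^iT^i$ is a rank-one counterexample --- and whenever $u(T^k)$ fails to be a polynomial one gets $w(T^k)\notin K((\tt))$, which can happen for infinitely many $k$. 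Since the identity $\bar V^{\alg}=\lan w(T^k)\mid k\ge1\ran_K$, the density of $\bar V^{\alg}$ in $\bar V$, the injectivity, and the index comparison are all built on top of this identification, the gap propagates through the entire forward direction. It is repairable, but only by a second approximation that you do not perform: write the finite-rank $u$ as $\sum_j\lambda_j\otimes f_j$ with $\|\lambda_j\|,\|f_j\|\le1$ (possible over a discretely valued field), truncate each $f_j$ to a polynomial modulo an error of norm $\le|p|$, and absorb the resulting perturbation of norm $\le|p|$ into the invertible factor by a second application of your Neumann-series trick; only then is the range of $u$ contained in a span of monomials and your computation of $\bar V\cap K((\tt))$ valid.

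The second gap is one you flag yourself but do not close, and it is not merely a formality: in the converse direction the norm bounds $\|u\|\le1$, $\|v\|\le1$ are exactly where the hypothesis bites, and the finitely many exceptional basis vectors $w_i$ with $\|w_i\|>1$ cannot simply be ``absorbed into the finite-rank part of $u$,'' because a column of $u$ or $v$ of norm $>1$ violates the definition of $\Gr^{\an}(K)$ no matter how small its rank. The correct fix is to realize each exceptional $w_i$ as $w(h_i)$ for an element $h_i\in H_+(K)$ of correspondingly large norm --- that is, to let $1+u$ contract on a finite-dimensional subspace rather than act as a near-isometry --- which keeps both blocks of norm $\le1$ while still hitting the required image; one must also check that the resulting $w$ remains injective with image the closure of $V$. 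As written, your proof establishes neither direction completely, although both halves are within reach of the tools you have already assembled.
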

By this proposition, 
we regard $\Gr^{\an}(K)$ as a subset of $\Gr^{\alg}(K)$.
It follows that 
$\Kr_{\an}(X, N) = \{ (\sL, \sig) \in \Kr(X, N) ~|~
W(\sL, \sig) \in \Gr^{\an}(K) \}$.

\subsection{Action of $p$-adic loop group and Anderson's theorem}
\label{sect:final}
In \cite[\S 3.3]{Anderson},
the action 
\[ \Gam(K) \times \Gr^{\an}(K) \to \Gr^{\an}(K), \qquad 
 (h, \bar V) \mapsto h \bar V := \{ hv ~|~ v \in \bar V \} 
\]
of $\Gam(K)$ on $\Gr^{\an}(K)$
is defined. 
Proposition \ref{prop:loopgroupaction} 
is also proved in loc. cit.

Finally, Theorem \ref{thm:Dwork_loop}
is a reformulation of \cite[Lemma 3.5.1]{Anderson}.
Anderson proved this extraordinary result
by introducing the $p$-adic version of {\it Sato tau-function},
which plays a central role in
Sato's theory of KP hierarchy
(see \cite{Sato, Sato-Sato, Segal-Wilson}). 
Anderson's proof of Theorem \ref{thm:Dwork_loop} is based on
a careful estimate of the tau function.

\vspace{5mm}

\noindent
{\it Acknowledgement.} 
We would like to express our gratitude to Noriyuki Otsubo
and Shinichi Kobayashi
for his insightful comments.
In particular, the remarks in \S \ref{sect:otsubo} 
and \ref{rem:addedcomment}
are suggested by them.
We are also deeply grateful to Takeshi Ikeda
for stimulating discussion.
We learned the importance of 
the equation of the form \eqref{eqn:T^p} from him.
%Thanks are also due to the referees
%for pointing out mistakes.
%The proofs in \S \ref{sect:decdwork} and \ref{sect:torsion}
%are also clarified by their comments.

\bibliographystyle{plain}
%\bibliography{references.bib} 

% \bib, bibdiv, biblist are defined by the amsrefs package.
\begin{bibdiv}
\begin{biblist}

\bib{Anderson}{article}{
      author={Anderson, G.~W.},
       title={Torsion points on {J}acobians of quotients of {F}ermat curves and
  {$p$}-adic soliton theory},
        date={1994},
     journal={Invent. Math.},
      volume={118},
      number={3},
       pages={475\ndash 492},
}

\bib{Aoki}{article}{
      author={Aoki, N.},
       title={Simple factors of the {J}acobian of a {F}ermat curve and the
  {P}icard number of a product of {F}ermat curves},
        date={1991},
     journal={Amer. J. Math.},
      volume={113},
      number={5},
       pages={779\ndash 833},
}

\bib{Boxall-Grant}{article}{
      author={Boxall, J.},
      author={Grant, D.},
       title={Examples of torsion points on genus two curves},
        date={2000},
     journal={Trans. Amer. Math. Soc.},
      volume={352},
      number={10},
       pages={4533\ndash 4555},
}

\bib{Coleman1}{article}{
      author={Coleman, R.~F.},
       title={Torsion points on curves and {$p$}-adic abelian integrals},
        date={1985},
     journal={Ann. of Math. (2)},
      volume={121},
      number={1},
       pages={111\ndash 168},
}

\bib{Dwork}{article}{
      author={Dwork, B.},
       title={{On the zeta function of a hypersurface}},
        date={1962},
     journal={Publications Math\'{e}matiques de l'IH\'{E}S},
      volume={12},
      number={1},
       pages={5\ndash 68},
}

\bib{Grant}{article}{
      author={Grant, D.},
       title={Torsion on theta divisors of hyperelliptic {F}ermat {J}acobians},
        date={2004},
     journal={Compos. Math.},
      volume={140},
      number={6},
       pages={1432\ndash 1438},
}

\bib{Kob-Roh}{article}{
      author={Koblitz, N.},
      author={Rohrlich, D.},
       title={Simple factors in the {J}acobian of a {F}ermat curve},
        date={1978},
     journal={Canad. J. Math.},
      volume={30},
      number={6},
       pages={1183\ndash 1205},
}

\bib{Koblitz}{book}{
      author={Koblitz, N.},
       title={p-adic analysis: A short course on recent work},
   publisher={Cambridge Univ Pr},
        date={1980},
      volume={46},
}

\bib{Mumford2}{inproceedings}{
      author={Mumford, D.},
       title={An algebro-geometric construction of commuting operators and of
  solutions to the {T}oda lattice equation, {K}orteweg de{V}ries equation and
  related nonlinear equation},
        date={1978},
   booktitle={Proceedings of the {I}nternational {S}ymposium on {A}lgebraic
  {G}eometry ({K}yoto {U}niv., {K}yoto, 1977)},
   publisher={Kinokuniya Book Store},
     address={Tokyo},
       pages={115\ndash 153},
}

\bib{MumfordII}{book}{
      author={Mumford, D.},
       title={Tata lectures on theta. {II}},
      series={Modern Birkh\"auser Classics},
   publisher={Birkh\"auser Boston Inc.},
     address={Boston, MA},
        date={2007},
        note={Jacobian theta functions and differential equations, With the
  collaboration of C. Musili, M. Nori, E. Previato, M. Stillman and H. Umemura,
  Reprint of the 1984 original},
}

\bib{Raynaud}{incollection}{
      author={Raynaud, M.},
       title={Sous-vari\'et\'es d'une vari\'et\'e ab\'elienne et points de
  torsion},
        date={1983},
   booktitle={Arithmetic and geometry, {V}ol. {I}},
      series={Progr. Math.},
      volume={35},
   publisher={Birkh\"auser Boston},
     address={Boston, MA},
       pages={327\ndash 352},
}

\bib{Sato-Sato}{incollection}{
      author={Sato, M.},
      author={Sato, Y.},
       title={Soliton equations as dynamical systems on infinite-dimensional
  {G}rassmann manifold},
        date={1983},
   booktitle={Nonlinear partial differential equations in applied science
  ({T}okyo, 1982)},
      series={North-Holland Math. Stud.},
      volume={81},
   publisher={North-Holland},
     address={Amsterdam},
       pages={259\ndash 271},
}

\bib{Sato}{incollection}{
      author={Sato, M.},
       title={The {KP} hierarchy and infinite-dimensional {G}rassmann
  manifolds},
        date={1989},
   booktitle={Theta functions---{B}owdoin 1987, {P}art 1 ({B}runswick, {ME},
  1987)},
      series={Proc. Sympos. Pure Math.},
      volume={49},
   publisher={Amer. Math. Soc.},
     address={Providence, RI},
       pages={51\ndash 66},
}

\bib{Segal-Wilson}{article}{
      author={Segal, G.},
      author={Wilson, G.},
       title={Loop groups and equations of {K}d{V} type},
        date={1985},
     journal={Inst. Hautes \'Etudes Sci. Publ. Math.},
      number={61},
       pages={5\ndash 65},
}

\bib{Tzermias}{article}{
      author={Tzermias, P.},
       title={The {M}anin-{M}umford conjecture: a brief survey},
        date={2000},
     journal={Bull. London Math. Soc.},
      volume={32},
      number={6},
       pages={641\ndash 652},
}

\end{biblist}
\end{bibdiv}

%%%%%%%%%%%%%%%%%%%%%%%%%%%%%%%%%%%%%%%%%%%%%%%%%%%%%%%%%%%%%%%%%%%%%%%%%%%%%%%%%%%%%%%%%%%%%%%%%%%%%%
\end{document}